\newcommand{\eps}{\varepsilon} 
\newcommand{\RR}{{\mathbb R}} 
\newcommand{\EE}{{\mathbb E}}
\newcommand{\NN}{{\mathbb N}}
\newcommand{\CC}{{\mathbb C}}
\DeclareMathAlphabet{\itbf}{OML}{cmm}{b}{it}
\newcommand{\qed}{\hfill $\Box$ \medskip}
\begin{document}

\title{Intensity fluctuations in random waveguides}

\author{Josselin Garnier\thanks{CMAP, CNRS, Ecole polytechnique, Institut Polytechnique de Paris, 91128 Palaiseau Cedex - France (josselin.garnier@polytechnique.edu). http://www.josselin-garnier.org}}
 
\pagestyle{myheadings} \markboth{Intensity fluctuations in random waveguides}{J. Garnier} 

\maketitle

\begin{abstract}
An asymptotic analysis of wave propagation in randomly perturbed waveguides is carried out in order to identify the effective Markovian dynamics of the guided mode powers.
The main result  consists in a quantification of the fluctuations of the mode powers and wave intensities 
that increase exponentially with the propagation distance.
The exponential growth rate is studied in detail so as to determine its dependence with respect to the waveguide geometry, the statistics of the random perturbations, and the operating wavelength.
\end{abstract}

\begin{keywords} Waveguide; wave propagation in random media; diffusion approximation.
\end{keywords}

\begin{AMS} 
35R60; % 	Partial differential equations with randomness, stochastic partial differential equations
35L05;   % Wave equation
60F05; %Central limit and other weak theorems
35Q60; % PDEs in connection with optics and electromagnetic theory
35Q99. % None of the above, but in this section
\end{AMS}

\section{Introduction}
We consider wave propagation in randomly perturbed waveguides.
The random perturbations may affect the index of refraction within the core of the waveguide or the geometry of the core boundary.
An asymptotic analysis based on a separation of scales technique can be applied when the amplitude of the random perturbation
is small, its correlation length is of the same order as the operating wavelength, and the propagation distance is large so that
the net effect of the perturbations is of order one.
The overall result is that the scalar wavefield can be expanded on the complete basis
of the modes of the unperturbed waveguide, that contains guided modes, radiating modes and evanescent modes,
and the complex mode amplitudes of this decomposition follow an effective Markovian dynamics.
In particular the guided mode powers form a Markovian process with a generator that describes random exchange of powers
between the guided modes and power leakage (towards the radiating modes) that can be expressed as a deterministic 
mode-dependent dissipation.
These results can be found in different forms in the physics literature \cite{marcuse,dozier,colosi09} and in the mathematics literature \cite{kohler77,GP07,gomez}.
In this paper we present a unified framework that deals with interior and boundary random fluctuations, 
we clarify the relationships between the mode-dependent dissipation coefficients and the statistics of the random perturbations, and we give a precise characterization of the mode power fluctuations, which is the main original result of the paper and which can be summarized as follows.

The effective Markovian description of the guided mode powers makes it possible to analyze their first- and second-order moments
(that are second- and fourth-order moments of the mode amplitudes),
which in turn gives a statistical description of the intensity distribution of the wavefield.
We find that the relative fluctuations of the intensity are, in general,  characterized by an exponential growth with the propagation distance, whose rate can be defined
as the difference of the first eigenvalues of two symmetric matrices (or two self-adjoint operators).
When the effective dissipation is negligible, we recover the well-known equipartition result  \cite{FGPSbook,GP07}: The exponential growth rate is zero and
the power becomes equipartitioned amongst the guided modes.
When there is effective dissipation, the exponential growth rate can be positive, which means that power fluctuations may become very large, 
as first noticed in the physics literature by Creamer \cite{creamer}.
In fact we show that  the exponential growth rate  is positive as soon as two effective mode-dependent 
dissipation coefficients are different. Our analysis shows that the growth rate increases when the effective mode-dependent 
dissipation coefficients become more different, and it decreases when the number of guided modes increases.
Finally, we analyze a special regime, the continuum approximation, 
in which the operating frequency is large so that the number of guided modes becomes large.
Under such circumstances, we find that the exponential growth rate vanishes.
The exponential growth of the intensity fluctuations can, therefore, only be observed when there is a limited 
number of guided modes, and we recover the standard result that, in open random medium, the wavefield
behaves like a Gaussian-distributed complex field for large propagation distances and the scintillation index that measures the
relative intensity fluctuations becomes equal to one.

The paper is organized as follows.
In Section \ref{sec:intro} we formulate the problem and present the waveguide geometry.
In Section \ref{sec:homo} we review the spectral analysis of the ideal waveguide, when the medium inside the core is homogeneous
and the boundaries are straight.
In Section \ref{sec:random} we explain that the wavefield in the random waveguide can be expanded on the set of
eigenmodes of the ideal waveguide and we identify the set of coupled equations satisfied by the mode amplitudes.
In Section \ref{sec:effmarkov1} we present the effective Markovian dynamics for the mode amplitudes and
in Section \ref{sec:effmarkov2} we remark that the mode powers also satisfy Markovian dynamics.
The long-range behavior of the mean mode powers is described in Section \ref{sec:effmarkov3},
and the fluctuation analysis in Section \ref{sec:fluctuationanalysis} reveals that the normalized variance of the intensity 
grows exponentially with the propagation distance.

\section{Wave propagation in waveguides}
\label{sec:intro}%
Our model consists of a two-dimensional waveguide with range axis denoted by $z \in \RR$ and transverse coordinate
denoted by $x \in \RR$ (see Figure \ref{fig:1}). This may model a dielectric slab waveguide for instance. 
A point-like source at a fixed position $(x,z) = (x_{\rm s},0)$ transmits a time-harmonic signal.
The wavefield ${p}(x,z)$ satisfies the Helmholtz equation:
\begin{align}
\Big[  (\partial_x^2 + \partial_z^2 ) +  k^2 {\rm n}^2 (x,z)   \Big] 
{p}(x,z)  = \delta(z)\delta(x-x_{\rm s}),  \label{eq:pressure0}
\end{align}
for $ (x,z) \in \RR^2$, where $k$ is the homogeneous wavenumber and  ${\rm n}(x,z)$ is the index of refraction at position $(x,z)$.  

In the case of ideal (unperturbed) waveguides, the index of refraction
is range-independent and equal to
\begin{align}
{\rm n}^{(0)}(x)^2 = \left\{
\begin{array}{ll}
{\it{n}}^2 & \mbox{ if } x \in (-d/2,d/2),\\
1 &\mbox{ otherwise,}
\end{array}
\right.
\label{eq:no}
\end{align}
where $n>1$ is the relative index of the core and $d>0$ is its diameter.

We are interested in randomly perturbed waveguides.
In this paper we address two types of random waveguides.

Type I perturbation: in the first type, the index of refraction within the core region $x\in (-d/2,d/2)$ is randomly perturbed \cite{beran,colosi09,colosi12,gomez,kohler77}:
\begin{equation}
\label{eq:modelpert1}
{\rm n}^{(\eps)}(x,z)^2 = \left\{
\begin{array}{ll}
{\it{n}}^2 +\eps \nu(x,z) & \mbox{ if } x \in (-d/2,d/2) \mbox{ and } z\in (0,L^{(\eps)}),\\
1 &\mbox{ otherwise}.
\end{array}
\right.
\end{equation}
The fluctuations are modeled by the zero-mean, bounded,
stationary in $z$ random process $\nu(x,z)$
with smooth covariance function
\begin{equation}
{\cal R}_{\rm I}(x,x',z') = \EE[\nu(x,z)\nu(x',z+z')]. 
\end{equation}
It satisfies strong mixing conditions in $z$ as defined for example in
\cite[section 2]{papa74}. 
The typical amplitude of the fluctuations of index of refraction is assumed to be 
much smaller than $1$ and it is modeled
 by the small and positive dimensionless parameter $\eps$.

\begin{figure}
\centerline{
\hspace*{-0.4cm}
\includegraphics[width=7.8cm]{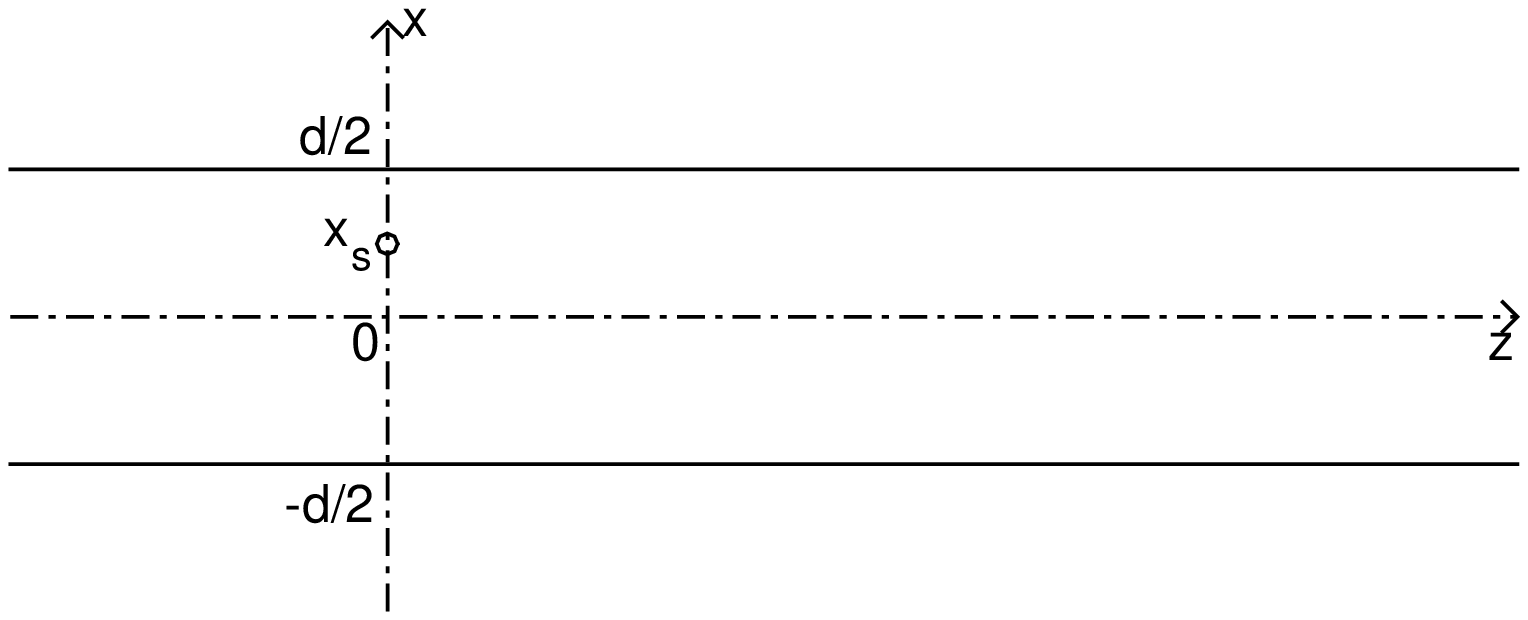}
\hspace*{-1.0cm}
\includegraphics[width=7.8cm]{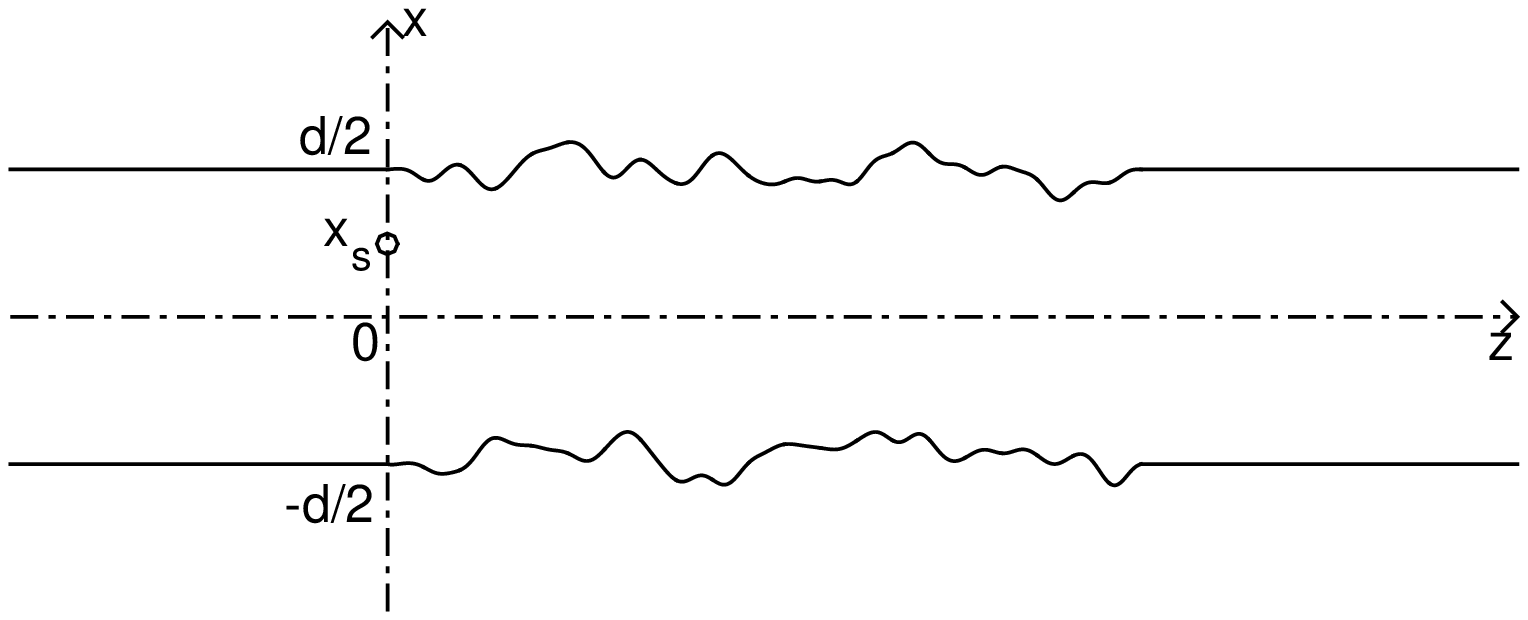}
}
\caption{Left: An ideal two-dimensional waveguide. Right: A two-dimensional waveguide with cross-section perturbed by random fluctuations of the top and bottom boundaries. The point source is in the plane $z=0$. }
\label{fig:1}
\end{figure}

Type II perturbation: in the second type (see Figure \ref{fig:1}), the boundaries of the core are randomly perturbed \cite{alonso,borcea15,marcuse69,marcuse,gomez2}:
\begin{equation}
\label{eq:modelpert2}
{\rm n}^{(\eps)}(x,z)^2 = \left\{
\begin{array}{ll}
n^2 & \mbox{ if } x \in \big( {\cal D}_{-}^{(\eps)}(z), {\cal D}_{+}^{(\eps)}(z) \big)  \mbox{ and } z\in (0,L^{(\eps)}) , \\
 1 &\mbox{ otherwise,}
\end{array}
\right.
\end{equation}
where
\begin{align}
\label{eq:Interfaces1a}
{\cal D}_{-}^{(\eps)}(z) &= -d/2+ \eps d\nu_1(z)  ,  \\
{\cal D}_{+}^{(\eps)}(z) &= d/2  +\eps d\nu_2(z) .
\label{eq:Interfaces1b}
\end{align}
The fluctuations are modeled by the zero-mean, bounded,
independent and identically distributed stationary random processes $\nu_1$ and $\nu_2$ 
with smooth covariance function
\begin{equation}
{\cal R}_{\rm II}(z') = \EE[\nu_q(z)\nu_q(z+z')], \qquad q = 1, 2. \label{eq:covar}
\end{equation}
They satisfy strong mixing conditions.
The typical amplitude of the fluctuations of the boundaries is assumed to be 
much smaller than the core diameter 
$d$ and it is modeled in (\ref{eq:Interfaces1a}-\ref{eq:Interfaces1b}) by the
small and positive dimensionless parameter $\eps$.

We study the wavefield at $z > 0$, satisfying  
\begin{equation}
{p}(x,z)
\in {\cal C}^0\big( (0,+\infty), H^2(\RR)\big)
\cap  {\cal C}^2\big( (0,+\infty),L^2(\RR)\big) ,  
\label{eq:functclass}
\end{equation}
and to set radiation conditions, we have assumed that the random
fluctuations are supported in the range interval $(0,L^{(\eps)})$.
We will see that net scattering effect of these fluctuations  becomes of
order one at range distances of order $\eps^{-2}$, so we consider the interesting case $L^{(\eps)} =
L/\eps^2$. 

\section{Homogeneous waveguide}
\label{sec:homo}%
In this section, we consider an index of refraction of the form (\ref{eq:no}), which is stepwise constant.
There is  no fluctuation of the medium along the $z$-axis.
The analysis of the perfect waveguide is classical \cite{magnanini,wilcox}, we only give the main results. 
The Helmholtz operator has a spectrum of the form
\begin{equation}
\label{eq:spectrum0}
(-\infty,  {k}^2) \cup \{ \beta_{N-1}^2,\ldots,\beta_0^2\}  ,
\end{equation}
where the $N$ modal wavenumbers $\beta_j$  are positive and 
$k^2<\beta_{N-1}^2 < \cdots <\beta_0^2 <n^2 k^2$.
The  generalized eigenfunctions $\phi_{t,\gamma}$, $t\in \{e,o\}$, associated to the spectral parameter $\gamma$ in the continuous spectrum $(-\infty,  {k}^2)$
and the eigenfunctions $\phi_j$, $j=0,\ldots,N-1$,  associated to the discrete spectrum,
are given in Appendix \ref{app:dec}.
The generalized eigenfunctions $\phi_{e,\gamma}$ are even and $\phi_{o,\gamma}$ are odd.
The eigenfunctions $\phi_j$ are even for even $j$ and odd for odd $j$.
Any function can be expanded on the complete set of the eigenfunctions of the Helmholtz operator.
In particular, any solution of the Helmholtz equation in homogeneous medium can be expanded as 
\begin{equation}
\label{eq:modalexpansion}
{p}(x,z) = \sum_{j=0}^{N-1} {p}_j(z) \phi_j(x)
+\sum_{t\in \{e,o\}} \int_{-\infty}^{{k}^2} {p}_{t,\gamma} (z) \phi_{t,\gamma}(x) d\gamma
 .
\end{equation}
The modes for $j=0,\ldots,N-1$ are guided, the modes for $\gamma \in (0,{k}^2)$ are radiating, the modes for 
$\gamma \in (-\infty,0)$ are evanescent.
Indeed, the complex mode amplitudes satisfy
\begin{align}
\partial_z^2 {p}_j +\beta_j^2 {p}_j&=0, \quad  j=0,\ldots,N-1,\\
\partial_z^2 {p}_{t,\gamma} +\gamma {p}_{t,\gamma}&=0, \quad  \gamma \in (-\infty,{k}^2) ,
\end{align}
for any $z\neq 0$.
Therefore, if the source is of the form (\ref{eq:pressure0}), we have for $z>0$:
\begin{align}
\nonumber
{p}(x,z) =&  \sum_{j=0}^{N-1} \frac{{a}_{j,{\rm s}}}{\sqrt{\beta_j}} e^{i \beta_j z} \phi_j(x)
+\sum_{t\in \{e,o\}} \int_0^{{k}^2}  \frac{{a}_{t,\gamma,{\rm s}}}{\gamma^{1/4}} e^{i \sqrt{\gamma} z}\phi_{t,\gamma}(x) d\gamma
\\
&+\sum_{t\in \{e,o\}}\int_{-\infty}^0  \frac{{a}_{t,\gamma,{\rm s}}}{|\gamma|^{1/4}}   e^{- \sqrt{|\gamma|} z} \phi_{t,\gamma}(x) d\gamma
,
\end{align}
where the mode amplitudes are constant and determined by the source:
\begin{align}
{a}_{j,{\rm s}} =  & \frac{\sqrt{\beta_j}}{2} \phi_j(x_{\rm s})  ,\quad j=0,\ldots,N-1,
\label{eq:defajs}
\\
%{a}_{\gamma,{\rm s}} = & \frac{\gamma^{1/4}}{2}   \phi_\gamma(z_0),\quad \gamma \in (0,{k}^2) ,\\
%{a}_{\gamma,{\rm s}} = & \frac{|\gamma|^{1/4}}{2}  \phi_\gamma(z_0) ,\quad \gamma \in (-\infty,0) .
{a}_{t,\gamma,{\rm s}} = & \frac{|\gamma|^{1/4}}{2}  \phi_{t,\gamma}(x_{\rm s}) ,\quad \gamma \in (-\infty,{k}^2) ,\quad t\in \{e,o\}.
\label{eq:defagammas}
\end{align}

\section{Random  waveguide}
\label{sec:random}
We consider the two types of random perturbations described in Section \ref{sec:intro}. In both cases we can write
$$
{\rm n}^2(x,z) = {\rm n}^{(0)}(x)^2  + V^{(\eps)}(x,z)  {\bf 1}_{(0,L^{(\eps)})}(z) ,
$$
where the perturbation is of the form
\begin{align}
\label{eq:newindrefI}
V^{(\eps)} (x,z) = \eps  \nu(x,z)
\end{align}
for type I perturbations, and 
\begin{align}
\nonumber
V^{(\eps)}(x,z) =&
 (n^2-1) 
\big[ -
 {\bf 1}_{(-d/2,-d/2+\eps d \nu_1(z))}(x) {\bf 1}_{(0,+\infty)}(\nu_1(z))  \\
\nonumber
 &
 \quad +
 {\bf 1}_{(-d/2+\eps d \nu_1(z),-d/2)}(x) {\bf 1}_{(-\infty,0)}(\nu_1(z)) 
 \big]  \\
\nonumber
 &+   (n^2-1)  
\big[
 {\bf 1}_{(d/2, d/2+\eps d \nu_2(z))}(x) {\bf 1}_{(0,+\infty)}(\nu_2(z))\\
  &
\quad -
 {\bf 1}_{(d/2+\eps d \nu_2(z),d/2)}(x) {\bf 1}_{(-\infty,0)}(\nu_2(z)) 
 \big] 
\label{eq:newindref}
\end{align}
for type II perturbations.

The solution of the perturbed Helmholtz equation (\ref{eq:pressure0})
can be expanded as (\ref{eq:modalexpansion}) and the complex mode amplitudes satisfy the coupled equations
for $z\in (0,L^{(\eps)})$:
\begin{align}
\label{eq:cma2a}
\partial_z^2 {p}_j +\beta_j^2 {p}_j&= -k^2 \sum_{l=0}^{N-1} C^{(\eps)}_{j,l}(z) {p}_l 
-k^2 \sum_{t'\in \{e,o\}} \int_{-\infty}^{{k}^2} C^{(\eps)}_{j,t',\gamma'} (z) {p}_{t',\gamma'} d\gamma',
\end{align}
for $j=0,\ldots,N-1$,
\begin{align}
\partial_z^2 {p}_{t,\gamma} +\gamma {p}_{t,\gamma}&= -k^2 \sum_{l=0}^{N-1} C^{(\eps)}_{t,\gamma , l}(z) {p}_l 
- k^2 \sum_{t'\in \{e,o\}}\int_{-\infty}^{{k}^2} C^{(\eps)}_{t,\gamma,t',\gamma'} (z) {p}_{t',\gamma'} d\gamma',  
\label{eq:cma2b}
\end{align}
for $ \gamma \in (-\infty,{k}^2) $ and $t \in \{e,o\}$, with
\begin{align}
C^{(\eps)}_{j,l}(z) =& \left(\phi_j,\phi_l V^{(\eps)}(\cdot,z) \right)_{L^2} ,\\
C^{(\eps)}_{j,t',\gamma'}(z) =& \left(\phi_j,\phi_{t',\gamma'} V^{(\eps)}(\cdot,z)    \right)_{L^2} ,\\
C^{(\eps)}_{t,\gamma , l}(z) =&  \left(\phi_{t,\gamma},\phi_l V^{(\eps)}(\cdot,z)  \right)_{L^2} ,\\
C^{(\eps)}_{t,\gamma,t',\gamma'}(z) =& \left(\phi_{t,\gamma},\phi_{t',\gamma'} V^{(\eps)}(\cdot,z)   \right)_{L^2} ,
\end{align}
and $\left(\cdot,\cdot\right)_{L^2}$ stands for the standard scalar product in $L^2(\RR)$ (see (\ref{scalarproduct})).
These equations are obtained by substituting the ansatz (\ref{eq:modalexpansion}) into (\ref{eq:pressure0}) and by projecting onto the eigenmodes.

From the definitions \eqref{eq:newindrefI} or (\ref{eq:newindref}) of $V^{(\eps)}(x,z)$ and the Taylor expansions of the eigenfunctions $\phi_{j}(x)$ and $\phi_{t,\gamma}(x)$ 
around $x = \pm d/2$, we obtain power series (in $\eps$) expressions of the coefficients $C^{(\eps)}_{j,l}$:
\begin{align}
C_{j, l}^{(\eps)} (z) =& \eps C_{j,l}(z)  + \eps^2 c_{j,l}(z)  +o(\eps^2) ,\\
C_{j, l}(z) =& 
\left\{
\begin{array}{ll}
\left(\phi_j,\phi_l \nu(\cdot,z) \right)_{L^2}  &\mbox{ type I}\\
 (n^2-1) d \big\{ - \nu_1(z) [\phi_{j} \phi_{l}] \big(-\frac{d}{2}\big) +  \nu_2(z)[\phi_{j} \phi_{l}] 
\big(\frac{d}{2}\big)  \big\}
&\mbox{ type II}
\end{array}
\right.
\label{expres:Cttp} ,\\
c_{j,l}(z) =& 
\left\{
\begin{array}{ll}
0 &\mbox{ type I}\\
\frac{(n^2-1)d^2}{2} \big\{ - \nu_1^2(z)  \partial_x[ \phi_{j} \phi_{l}] \big(-\frac{d}{2}\big) +  \nu_2^2(z)  \partial_x[\phi_{j} \phi_{l}] \big(\frac{d}{2}\big)  \big\}
&\mbox{ type II}
\end{array}
\right.
,
 \label{eq:csmall}
\end{align}
and similarly for  $C_{j ,t, \gamma}^{(\eps)}$, $C_{t,\gamma, l}^{(\eps)}$, and $C_{t,\gamma,t', \gamma'}^{(\eps)}$.

We finally introduce the generalized forward-going and backward-going mode amplitudes:
\begin{equation}
\label{eq:amplitudes}
\{a_{j}(z), \, b_{j}(z), ~ j = 0, \ldots, N-1\} ~~ \mbox{and} ~~ \{a_{t,\gamma}(z), \, b_{t,\gamma}(z), ~ \gamma \in (0,k^2)\},
\end{equation}
for $t \in \{e,o\}$, which are defined such that 
\begin{align}
{p}_{j}(z) =& \frac{1}{\sqrt{\beta_{j}}}\Big( {a}_{j} (z) e^{i\beta_{j} z} +{b}_{j}(z) e^{- i\beta_{j} z} \Big), \nonumber \\
\partial_z  {p}_{j}(z) =& i\sqrt{\beta_{j}}\Big( {a}_{j} (z) e^{i\beta_{j}z} -{b}_{j}(z) e^{- i\beta_{j} z} \Big),\quad j=0,\ldots,N-1,
\label{eq:guidedFB}
\end{align}
and 
\begin{align}
{p}_{t,\gamma}(z) =& \frac{1}{\gamma^{1/4}}\Big( {a}_{t,\gamma} (z) e^{i\sqrt{\gamma} z} 
+{b}_{t,\gamma}(z) e^{- i\sqrt{\gamma}z} \Big), \nonumber \\
\partial_z  {p}_{t,\gamma}(z) =& i\gamma^{1/4}\Big( {a}_{t,\gamma} (z) e^{i\sqrt{\gamma} z} 
-{b}_{t,\gamma}(z) e^{- i\sqrt{\gamma}z} \Big),\quad \gamma\in (0,k^2) ,\quad t \in \{e,o\}.
\label{eq:radFB}
\end{align}
We can substitute (\ref{eq:guidedFB}--\ref{eq:radFB}) into (\ref{eq:cma2a}--\ref{eq:cma2b}) in order to
obtain the  first-order system of coupled random differential equations 
satisfied by the mode amplitudes \eqref{eq:amplitudes}:
\begin{align}
\nonumber
\partial_z {a}_{j}(z) =& \frac{i k^2}{2} \hspace{-0.02in}
\sum_{l'=0}^{N-1} \frac{C_{j,l'}^{(\eps)}(z)}{\sqrt{\beta_{l'}\beta_{j}}} \Big[ {a}_{l'}(z) e^{i (\beta_{l'}-\beta_{j})z}
+ {b}_{l'}(z) e^{i (-\beta_{l'}-\beta_{j})z}
\Big]\\
\nonumber
&\hspace{-0.4in}+  \frac{i  k^2}{2} \hspace{-0.1in} \sum_{t' \in \{e,o\}} 
\int_0^{k^2}  \frac{C_{j,t',\gamma'}^{(\eps)}(z)}{\sqrt[4]{\gamma'}\sqrt{\beta_{j}}} \Big[ {a}_{t',\gamma'}(z) e^{i (\sqrt{\gamma'}-\beta_{j})z}
+ {b}_{t',\gamma'}(z) e^{i (-\sqrt{\gamma'}-\beta_{j})z}
\Big]d\gamma'\\
&\hspace{-0.4in}+
 \frac{i k^2}{2} \hspace{-0.1in}\sum_{t' \in \{e,o\}}
\int_{-\infty}^0 \frac{C^{(\eps)}_{j,t',\gamma'}(z)}{\sqrt{\beta_{j}}}  {p}_{t',\gamma'}(z) e^{-i \beta_{j}z}
d\gamma' ,
\label{eq:evol1a}
\end{align}
\begin{align}
\nonumber
\partial_z {a}_{t,\gamma}(z) =& \frac{i k^2}{2} \hspace{-0.02in} 
\sum_{l'=0}^{N-1} \frac{C_{t,\gamma,l'}^{(\eps)}(z)}{\sqrt[4]{\gamma} \sqrt{\beta_{l'}}} \Big[ {a}_{l'}(z) e^{i (\beta_{l'}-\sqrt{\gamma})z}
+ {b}_{l'}(z) e^{i (-\beta_{l'}-\sqrt{\gamma})z}
\Big]\\
\nonumber
&\hspace{-0.4in}+  \frac{i  k^2}{2} \hspace{-0.1in}\sum_{t' \in \{e,o\}} 
\int_0^{k^2}  \frac{C_{t,\gamma,t',\gamma'}^{(\eps)}(z)}{\sqrt[4]{{\gamma'}{\gamma}}} \Big[ {a}_{t',\gamma'}(z) e^{i (\sqrt{\gamma'}-\sqrt{\gamma})z}
+ {b}_{t',\gamma'}(z) e^{i (-\sqrt{\gamma'}-\sqrt{\gamma})z}
\Big]d\gamma'\\
&\hspace{-0.4in}+
 \frac{i  k^2}{2} \hspace{-0.1in} \sum_{t' \in \{e,o\}}
\int_{-\infty}^0 \frac{C_{t,\gamma,t',\gamma'}^{(\eps)}(z)}{\sqrt[4]{\gamma}}  {p}_{t',\gamma'}(z) e^{-i \sqrt{\gamma}z}
d\gamma' ,
\label{eq:evol1b}
\end{align}
with similar equations for $b_j$ and $b_{t,\gamma}$.
This system is complemented with the boundary conditions at $z=0$ and $z=L^{(\eps)}$:
$$
{a}_{j} (0) = a_{j,{\rm s}}, \quad {b}_{j} (L^{(\eps)}) = 0, \quad {a}_{t,\gamma} (0) = a_{t,\gamma,{\rm s}},
\quad {b}_{t,\gamma} (L^{(\eps)}) = 0,
$$
where ${a}_{j,{\rm s}} $ and ${a}_{t,\gamma,{\rm s}}$ are defined by (\ref{eq:defajs}-\ref{eq:defagammas}).
The evanescent mode amplitudes ${p}_{t,\gamma}$, $t\in \{e,o\}$, $\gamma\in (-\infty,0)$, 
satisfy (\ref{eq:cma2b}).

\section{The effective Markovian dynamics for the mode amplitudes}
\label{sec:effmarkov1}
We rename the complex mode amplitudes in the long-range scaling as 
\begin{align}
\label{eq:rescAmplitudesa}
&
{a}_{j}^{{\eps}}(z) = {a}_{j}\Big(\frac{z}{\eps^2}\Big),\quad
{b}_{j}^{{\eps}}(z) = {b}_{j}\Big(\frac{z}{\eps^2}\Big),\, \quad j=0,\ldots,N-1 ,  \\
&{a}_{t,\gamma}^{{\eps}}(z) = {a}_{t,\gamma}\Big(\frac{z}{\eps^2}\Big),~~\,
{b}_{t,\gamma}^{{\eps}}(z) = {b}_{t,\gamma}\Big(\frac{z}{\eps^2}\Big), \quad \gamma\in (0,k^2),\quad t\in \{e,o\} .
\label{eq:rescAmplitudesb}
\end{align}
We can follow the lines of  \cite{gomez} to get the following results.

1) In the regime $\eps \ll 1$ the evanescent mode amplitudes, that satisfy (\ref{eq:cma2b}),
can be expressed to leading order in closed forms as functions of the 
guided and radiating mode amplitudes (\ref{eq:rescAmplitudesa}-\ref{eq:rescAmplitudesb}).
Indeed it is possible to invert the  operator $\partial_z^2 + \gamma$ in (\ref{eq:cma2b}) for $\gamma<0$ by using the  Green's function that satisfies 
the radiation condition and to obtain:
\begin{align}
& {p}_{t,\gamma}(\frac{z}{\eps^2}) =  
\frac{\eps  k^2}{2\sqrt{|\gamma|}}   
\int_{0}^{L/\eps^2} 
\sum_{l'=0}^{N-1} \Bigg\{\frac{C_{t,\gamma,l'}(z')}{\sqrt{\beta_{l'}}} \Big[ {a}_{l'}^\eps(z) e^{i \beta_{l'} z'}
+ {b}^\eps_{l'} (z)e^{- i\beta_{l'} z'}
\Big]\nonumber \\
&+\int_0^{k^2}  \frac{C_{t,\gamma,t',\gamma'}(z')}{\sqrt[4]{\gamma'}} \Big[ {a}^\eps_{t',\gamma'}(z) e^{i  \sqrt{\gamma'} z'}
+ {b}^\eps_{t',\gamma'}(z) e^{- i \sqrt{\gamma'} z'}
\Big] d\gamma'  \Bigg\}
e^{-\sqrt{|\gamma|} |\frac{z}{\eps^2}-z'|} dz' 
\nonumber  \\
& + O(\eps^2), 
\end{align}
for $z > 0, \gamma < 0$ and $t \in \{e, o\}$. Here we recognize $G_\gamma (z,z')=\frac{1}{2\sqrt{|\gamma|}} e^{-\sqrt{|\gamma|} |z-z'|} $ that is the Green's function
of the equation $\partial_z^2 G_\gamma (z,z')+\gamma G_\gamma (z,z')= - \delta(z-z')$ for $\gamma<0$.

2) Under the assumption that the power spectral density $\widehat {\cal R}_{\rm I}(\kappa,x,x')$ for type-I perturbations
(or $\widehat {\cal R}_{\rm II}(\kappa)$ for type-II perturbations) 
has compact support or fast decay,
the forward-scattering approximation can be proved, i.e. the coupling between forward-going and backward-going mode amplitudes
is negligible, so that we have
\[
b_{j}^{{\eps}}(z) \approx 0,  \quad j = 0, \ldots, N-1, \qquad b_{t,\gamma}^{{\eps}}(z) \approx 0, ~~ \gamma \in (0,k^2), ~~ 
t \in \{e,o\}.
\]

3) The forward-going guided mode amplitudes $({a}_{j}^\eps)_{j=0}^{N-1}$ and radiating mode amplitudes $({a}_{t,\gamma}^\eps)_{\gamma \in (0,k^2),t\in \{e,o\}}$
then satisfy a closed linear system of the form
$$
\frac{d {\itbf a}^\eps}{dz} = \frac{1}{\eps} {\bf F}( \frac{z}{\eps^2}) {\itbf a}^\eps + {\bf G}( \frac{z}{\eps^2}) {\itbf a}^\eps +o(1),
$$
with initial conditions for ${\itbf a}^\eps$ at $z=0$.
Here $ {\bf F}$, resp. ${\bf G}$, is an operator with zero mean, resp. non-zero mean, and ergodic properties
inherited from those of the processes $\nu$.

We can finally apply a diffusion approximation theorem to establish the following result (see \cite{gomez} for the full statement or \cite{kohler77} for a first version in which the contributions of the evanescent modes is neglected, which means that the operator ${\cal L}^3$ is missing in the expression of the generator ${\cal L}$).

\begin{proposition}
The  random process 
$$
\big( ({a}_{j}^\eps(z) )_{j=0}^{N-1}, ({a}_{t,\gamma}^\eps(z) )_{\gamma \in (0,{k}^2), t\in \{e,o\}} \big) 
$$
converges in distribution in ${\cal C}^0([0,L], \mathbb{C}^{N}  \times L^2((0,{k}^2))^2 )$, 
the space of continuous functions from $[0,L]$ to $\mathbb{C}^{N}  \times L^2((0,{k}^2))^2$,
to the Markov process  
$$
\big( (\mathfrak{a}_{j}(z) )_{j=0}^{N-1} , (\mathfrak{a}_{t,\gamma} (z))_{\gamma \in (0,{k}^2),t\in \{e,o\}} \big)
$$
with infinitesimal generator
${\cal L}$.
Here $\mathbb{C}^{N}  \times L^2((0,{k}^2))^2$
is equipped with the weak topology and the infinitesimal generator has the form
$
{\cal L}= {\cal L}^1+{\cal L}^2+{\cal L}^3 ,
$
where ${\cal L}^j$, $1 \leq j \leq 3$, are the differential operators:
\begin{align}
\nonumber {\cal L}^1 = & \frac{1}{2} 
\sum_{j,l=0}^{N-1} 
\Gamma_{jl} \big( 
\mathfrak{a}_{j} \overline{\mathfrak{a}_{j}} \partial_{\mathfrak{a}_{l}} \partial_{\overline{\mathfrak{a}_{l}}}
+
\mathfrak{a}_{l} \overline{\mathfrak{a}_{l}} \partial_{\mathfrak{a}_{j}} \partial_{\overline{\mathfrak{a}_{j}}}
% \\ \nonumber &\quad \quad
-
\mathfrak{a}_{j}  \mathfrak{a}_{l} \partial_{\mathfrak{a}_{j}} \partial_{ \mathfrak{a}_{l}}
-
\overline{\mathfrak{a}_{j}}  \overline{\mathfrak{a}_{l}} \partial_{\overline{\mathfrak{a}_{j}}} \partial_{ \overline{\mathfrak{a}_{l}}}\big)
{\bf 1}_{ j\neq l}
\\
\nonumber &+
 \frac{1}{2} 
\sum_{j,l=0}^{N-1} 
\Gamma^1_{j l} \big( 
\mathfrak{a}_{j} \overline{\mathfrak{a}_{l}} \partial_{\mathfrak{a}_{j}} \partial_{\overline{\mathfrak{a}_{l}}}
+
\overline{\mathfrak{a}_{j}} \mathfrak{a}_{l} \partial_{\overline{\mathfrak{a}_{j}}} \partial_{\mathfrak{a}_{l}}
%\\   \nonumber  &\quad \quad
-
\mathfrak{a}_{j}  \mathfrak{a}_{l} \partial_{\mathfrak{a}_{j}} \partial_{ \mathfrak{a}_{l}}
-
\overline{\mathfrak{a}_{j}}  \overline{\mathfrak{a}_{l}} \partial_{\overline{\mathfrak{a}_{j}}} \partial_{ \overline{\mathfrak{a}_{l}}}\big)
\\ 
%\nonumber 
&+ \frac{1}{2} 
\sum_{j=0}^{N-1} \big( \Gamma_{jj} - \Gamma^1_{jj}\big)
\big( \mathfrak{a}_{j} \partial_{\mathfrak{a}_{j}} + \overline{\mathfrak{a}_{j}} \partial_{\overline{\mathfrak{a}_{j}}}
\big)
%\\  &
+\frac{i}{2} 
\sum_{j=0}^{N-1}  \Gamma^{s}_{jj}  
\big( \mathfrak{a}_{j} \partial_{\mathfrak{a}_{j}} - \overline{\mathfrak{a}_{j}} \partial_{\overline{\mathfrak{a}_{j}}}
\big)  , \label{eq:defL1}
\\
{\cal L}^2
=&
-\frac{1}{2}  
\sum_{j=0}^{N-1} ( \Lambda_{j}  +i\Lambda^{s}_{j} ) \mathfrak{a}_{j} \partial_{\mathfrak{a}_{j}} 
+
( \Lambda_{j}  - i\Lambda^{s}_{j} )  \overline{\mathfrak{a}_{j}} \partial_{\overline{\mathfrak{a}_{j}}}  ,
 \label{eq:defL2}\\{\cal L}^3
=&
i  
\sum_{j=0}^{N-1}  \kappa_{j}   \big( \mathfrak{a}_{j} \partial_{\mathfrak{a}_{j}} 
- \overline{\mathfrak{a}_{j}} \partial_{\overline{\mathfrak{a}_{j}}} \big) .
 \label{eq:defL3}
\end{align}
In these definitions we use the  classical complex derivative:
if $ \zeta=\zeta_r+i\zeta_i$, then $\partial_\zeta=(1/2)(\partial_{\zeta_r}-i \partial_{\zeta_i})$ and
$\partial_{\overline{\zeta}} =(1/2)(\partial_{\zeta_r} +i \partial_{\zeta_i})$,
and the coefficients of the operators (\ref{eq:defL1}-\ref{eq:defL3}) are defined for 
$j ,l= 0, \ldots, N-1$, as follows: 

 \noindent - For all $j \neq l$, $\Gamma_{jl}$ and $\Gamma^{s}_{j l}$ are given by 
\begin{align}
\Gamma_{jl}=& \frac{k^4}{2 \beta_j \beta_l}
\int_0^\infty{\cal R}_{jl}(z) \cos \big((\beta_l-\beta_j)z\big) dz , 
\label{def:Gammalj}
\\
\Gamma^{s}_{jl} =&
\frac{k^4}{2 \beta_j\beta_l}
\int_0^\infty 
{\cal R}_{jl}(z)  \sin \big( (\beta_l-\beta_j)z \big) dz ,
\end{align}
with ${\cal R}_{jl}(z) $ defined by
\begin{align}
\label{def:calRjl}
{\cal R}_{jl}(z) &:= \EE [C_{j,l}(0)C_{j,l}(z)] ,\\
\EE [C_{j,l}(0)C_{j',l'}(z)] &= 
\left\{
\begin{array}{ll}
\int_{-d/2}^{d/2} \int_{-d/2}^{d/2}\phi_j \phi_l(x) {\cal R}_{\rm I}(x,x',z) \phi_{j'} \phi_{l'}(x') dx dx' 
&\mbox{ type I}\\
 (n^2-1)^2 d^2 \big[   \phi_{j} \phi_{l} \phi_{j'} \phi_{l'}
\big(-\frac{d}{2}\big) \\
\quad \quad \quad  \quad \quad + \phi_{j} \phi_{l} \phi_{j'} \phi_{l'}
\big(\frac{d}{2}\big) \big] {\cal R}_{\rm II}(z) 
&\mbox{ type II}
\end{array}
\right.
\end{align}
\noindent - For all $j,l$:
\begin{align*}
\Gamma^1_{jl} =&
%\frac{k^4}{2 \beta_j\beta_l} \int_0^\infty \EE\big[ C_{j,j} (0) C_{l,l}(z) \big]   dz.
\frac{k^4}{4 \beta_j\beta_l} \int_0^\infty \EE\big[ C_{j,j} (0) C_{l,l}(z) \big]   +\EE\big[ C_{l,l} (0) C_{j,j}(z) \big]    dz.
\end{align*}
\noindent - For all $j$, $\Lambda_j$ is defined by 
\begin{align}
\label{def:Lambdaj}
\Lambda_j =& \int_0^{{k}^2} \frac{k^4}{2\sqrt{\gamma}\beta_j} \sum_{t\in \{e,o\}}
\int_0^\infty {\cal R}_{j,t,\gamma}(z) \cos \big((\sqrt{\gamma}-\beta_j)z\big) dz d\gamma
\end{align}
and
\begin{align*}
\Gamma_{jj} =& -\hspace{-0.05in}\sum_{l =0,l\neq j}^{N-1} \Gamma_{jl} ,\quad \quad
\Gamma^{s}_{jj} = -\hspace{-0.05in}\sum_{l =0,l\neq j}^{N-1} \Gamma^{s}_{jl}  ,\\
\Lambda_{j}^{s} =&\hspace{-0.05in}  \sum_{t\in \{e,o\}}  \int_0^{{k}^2} \hspace{-0.05in}\frac{k^4}{2\sqrt{\gamma}\beta_j}
\int_0^\infty {\cal R}_{j,t,\gamma}(z) 
\sin \big[ (\sqrt{\gamma}-\beta_j)z\big] dz d\gamma, \\
\kappa_{j} = &  \hspace{-0.05in}  \sum_{t\in \{e,o\}} \int_{-\infty}^0  \frac{k^4}{2\sqrt{|\gamma|}\beta_j}
\int_0^\infty {\cal R}_{j,t,\gamma}(z)  \cos ( \beta_j z) e^{-\sqrt{|\gamma|}z}dz d\gamma + \frac{k^2}{2 \beta_j} \EE[ c_{j,j}(0)],
\end{align*} 
where 
${\cal R}_{j,t,\gamma}(z)= \EE [ C_{j,t,\gamma}(0)C_{j,t,\gamma}(z) ]$ is defined as in (\ref{def:calRjl}) upon substitution $(t,\gamma)$ for $l$
and
$$
\EE[ c_{j,j}(0)]=
\left\{
\begin{array}{ll}
0 &\mbox{ type I}\\
(n^2-1)d^2 {\cal R}_{\rm II}(0) \partial_x [\phi_j^2]\big(\frac{d}{2}\big) &\mbox{ type II}
\end{array}
\right.
$$
\end{proposition}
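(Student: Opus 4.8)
The plan is to follow the diffusion-approximation route of \cite{kohler77,gomez}, adapted so as to treat the type~I and type~II perturbations simultaneously. By the three preliminary reductions recalled in Section~\ref{sec:effmarkov1} one may start from a \emph{closed} first-order linear system for the forward-going guided and radiating amplitudes alone,
\[
\frac{d{\itbf a}^\eps}{dz}=\frac1\eps{\bf F}\Big(\frac{z}{\eps^2}\Big){\itbf a}^\eps+{\bf G}\Big(\frac{z}{\eps^2}\Big){\itbf a}^\eps+o(1),\qquad {\itbf a}^\eps(0)\ \text{prescribed},
\]
where: the backward amplitudes have been discarded by the forward-scattering approximation (legitimate under the compact-support/fast-decay hypothesis on $\widehat{\cal R}_{\rm I}$, resp. $\widehat{\cal R}_{\rm II}$); the evanescent amplitudes $p_{t,\gamma}$, $\gamma<0$, have been eliminated by inverting $\partial_z^2+\gamma$ with the decaying Green's function $G_\gamma$, which is exactly what feeds the deterministic drift $\kappa_j$ and hence the operator $\mathcal L^3$; and ${\bf F}$ is a mean-zero, strongly mixing, stationary operator-valued process while ${\bf G}$ has mean and mixing properties inherited from $\nu$ (resp. $\nu_1,\nu_2$). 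The coefficients of ${\bf F},{\bf G}$ are the $\eps$-leading parts of the $C^{(\eps)}$'s, read off from \eqref{expres:Cttp}--\eqref{eq:csmall}.

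For a system of this form I would invoke the classical perturbed-test-function / martingale method for SDEs with mixing coefficients (\cite{papa74}). For a smooth bounded test function $f$ depending on finitely many coordinates of ${\itbf a}$ one introduces a corrected functional $f^\eps=f+\eps f_1+\eps^2 f_2$, where $f_1$ removes the $O(\eps^{-1})$ term — it is built from the integral of ${\bf F}(z/\eps^2+\cdot)$ against $\nabla f$, a Poisson-type corrector in the fast variable — and $f_2$ absorbs the resulting $O(1)$ fluctuation, so that $\mathcal L^\eps f^\eps=\bar{\mathcal L}f+o(1)$ with the error controlled uniformly in $\eps$ by the boundedness of the perturbations. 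Taking expectations of the products of coupling coefficients that appear in $\bar{\mathcal L}f$ and using stationarity plus mixing turns them into the half-line integrals
\[
\int_0^\infty\EE[C_{j,l}(0)C_{j,l}(z)]\,e^{\pm i(\beta_l-\beta_j)z}\,dz,\qquad
\int_0^{k^2}\!\!\!\int_0^\infty\EE[C_{j,t,\gamma}(0)C_{j,t,\gamma}(z)]\,e^{\pm i(\sqrt\gamma-\beta_j)z}\,dz\,d\gamma,
\]
together with the same integral weighted by $e^{-\sqrt{|\gamma|}z}$ for the evanescent contribution. Splitting into real and imaginary parts and inserting the explicit type~I and type~II expressions for $\EE[C_{j,l}(0)C_{j',l'}(z)]$ and for $\EE[c_{j,j}(0)]$ produces precisely the coefficients $\Gamma_{jl},\Gamma^s_{jl},\Gamma^1_{jl},\Lambda_j,\Lambda^s_j,\kappa_j$ and the operators $\mathcal L^1,\mathcal L^2,\mathcal L^3$ of the statement; the identities $\Gamma_{jj}=-\sum_{l\neq j}\Gamma_{jl}$, $\Gamma^s_{jj}=-\sum_{l\neq j}\Gamma^s_{jl}$ are just the statement that the guided--guided block of ${\bf F}$ is, at each $z$, skew-Hermitian (power-preserving among the guided modes), the power leakage being carried by $\Lambda_j$.

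It then remains to upgrade the convergence of generators to convergence in law. First I would prove tightness of the laws of ${\itbf a}^\eps$ in $\mathcal C^0([0,L],\CC^N\times L^2((0,k^2))^2)$ equipped with the weak topology on the radiating component, using the uniform moment bounds furnished by the corrected functionals together with an Aldous/Kolmogorov-type criterion — it is the weak topology that makes the infinite-dimensional radiating part tractable. Then I would identify every subsequential limit as a solution of the martingale problem for $\mathcal L$, and conclude by well-posedness of that martingale problem: $\mathcal L$ is the generator of a linear SDE with bounded coefficients, so uniqueness holds and the whole family ${\itbf a}^\eps$ converges in distribution to the stated Markov process.

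The step I expect to be the main obstacle is the infinite-dimensional bookkeeping around the radiating continuum: making rigorous the elimination of the evanescent modes and the forward-scattering reduction, showing that the $\gamma$-integrals defining $\Lambda_j,\Lambda^s_j,\kappa_j$ converge and that $\mathcal L$ acts continuously on the relevant space, and establishing tightness of the $L^2((0,k^2))^2$-valued radiating component — all of which is done in \cite{gomez}. The only genuinely new verification here is that, via the Taylor expansions of $\phi_j$ and $\phi_{t,\gamma}$ near $x=\pm d/2$, the boundary (type~II) couplings generate the same algebraic structure as the interior (type~I) couplings, so that a single generator $\mathcal L$ covers both models.
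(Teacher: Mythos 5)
Your proposal is correct and follows essentially the same route as the paper, which after the same three reductions (elimination of evanescent modes, forward-scattering approximation, closed system for ${\itbf a}^\eps$) invokes the diffusion-approximation/martingale machinery of \cite{papa74,kohler77,gomez} and then identifies the coefficients $\Gamma_{jl},\Gamma^s_{jl},\Gamma^1_{jl},\Lambda_j,\Lambda^s_j,\kappa_j$ from the correlation functions of the $C_{j,l}$'s, treating type~I and type~II perturbations in the unified way you describe. No substantive gap beyond what the paper itself delegates to \cite{gomez}.
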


We give some remarks before focusing our attention on the mode powers.

1) The convergence result holds in the weak topology. This means that we can only compute quantities of the 
form $\EE [ F( \mathfrak{a}_0,\ldots,\mathfrak{a}_{N-1},  \int_0^{{k}^2} \alpha_{e,\gamma} \mathfrak{a}_{e,\gamma} d\gamma,  \int_0^{{k}^2} \alpha_{o,\gamma} \mathfrak{a}_{o,\gamma} d\gamma)]$
for any test functions $\alpha_e,\alpha_o \in L^2((0,{k}^2))$ and $F:\mathbb{R}^{N+2}\to \mathbb{R}$.
These quantities 
 are the limits of $\EE [ F( {a}^\eps_0,\ldots, {a}^\eps_{N-1},  \int_0^{{k}^2} \alpha_{e,\gamma}  {a}^\eps_{e,\gamma} d\gamma,  \int_0^{{k}^2} \alpha_{o,\gamma}  {a}^\eps_{o,\gamma} d\gamma)]$ as $\eps \to 0$. 
 
2) The generator ${\cal L}$ does not involve $\partial_{\mathfrak{a}_{t,\gamma}}$ or  $\partial_{\bar{\mathfrak{a}}_{t,\gamma}}$. Therefore 
$  ( {a}_{j}^\eps (z))_{j=0}^{N-1}   $
converges in distribution in ${\cal C}^0([0,L], \mathbb{C}^{N}  )$ to
the Markov process  $  (\mathfrak{a}_{j}(z) )_{j=0}^{N-1}   $
with  generator ${\cal L}$. The weak and strong topologies are the same  in $\mathbb{C}^N$,
so we can compute any moment of the 
form $\EE [ F( \mathfrak{a}_0,\ldots,\mathfrak{a}_{N-1})]$, which are the limits of $\EE [ F(  {a}^\eps_0,\ldots, {a}^\eps_{N-1})]$.

3) ${\cal L}_1$ is the contribution of the coupling between guided modes, 
which gives rise to power exchange between the guided modes {(effective diffusion)}.

4) ${\cal L}_2$  is the contribution of the coupling between guided and radiating modes, 
which gives rise to power leakage from the guided modes to the radiating ones {(effective attenuation)}
and addition of frequency-dependent phases on the guided mode amplitudes (effective dispersion).
The effective {attenuation} and dispersion are produced by causal phenomena and they
{are} related to each other through Kramers-Konig relations \cite{garniersolna:wrcm}.

5)
${\cal L}_3$  is the contribution of the coupling between guided and evanescent modes,
which gives rise to additional phase terms on the guided mode amplitudes (effective dispersion).
This term is the main effect when the waveguide supports only one propagating mode and the core boundaries 
are hard or soft so that there is no radiating mode \cite{garnier_evan}.

6) If the generator ${\cal L}$ is applied to a test function that depends only on the mode powers $  (P_{j}   )_{j=0}^{N-1}   $,
with $P_{j} = |\mathfrak{a}_{j}|^2$,
then the result is a function that depends only on $ (P_{j} )_{j=0}^{N-1}  $. Thus, 
the mode powers $  (P_{j}(z) )_{j=0}^{N-1}   $ define  a Markov process, with infinitesimal generator defined by (\ref{eq:genLP}) below.

7) The radiation mode amplitudes  remain constant on $L^2((0,{k}^2))^2$, equipped with the weak topology, as $\eps \to 0$. However, this does not
describe the power $ \sum_{t\in \{e,o\}}  \int_0^{{k}^2} |{a}^\eps_{t,\gamma}|^2 d\gamma$ transported by the radiation modes,  because the convergence does not hold in the strong topology of $L^2((0,{k}^2))^2$ so we do not have $\sum_{t\in \{e,o\}} \int_0^{{k}^2} | {a}^\eps_{t,\gamma}|^2 d\gamma \to \sum_{t\in \{e,o\}}  \int_0^{{k}^2} |\mathfrak{a}_{t,\gamma}|^2 d\gamma$ as $\eps \to 0$.

8) When $N=1$, then the generator is
\begin{align}
\nonumber {\cal L}
=&
 \frac{\Gamma^1_{00}}{2}  \big( 
2 \mathfrak{a}_{0} \overline{\mathfrak{a}_{0}} \partial_{\mathfrak{a}_{0}} \partial_{\overline{\mathfrak{a}_{0}}}
-
\mathfrak{a}_{0}  \mathfrak{a}_{0} \partial_{\mathfrak{a}_{0}} \partial_{ \mathfrak{a}_{0}}
-
\overline{\mathfrak{a}_{0}}  \overline{\mathfrak{a}_{0}} \partial_{\overline{\mathfrak{a}_{0}}} \partial_{ \overline{\mathfrak{a}_{0}}}
-
 \mathfrak{a}_{0} \partial_{\mathfrak{a}_{0}} - \overline{\mathfrak{a}_{0}} \partial_{\overline{\mathfrak{a}_{0}}}
\big) \\
& - \frac{\Lambda_{0}  }{2} 
\big( \mathfrak{a}_{0} \partial_{\mathfrak{a}_{0}} + \overline{\mathfrak{a}_{0}} \partial_{\overline{\mathfrak{a}_{0}}} \big)
+\frac{i}{2}  ( \kappa_{0}  - \Lambda^{s}_{0} )
\big( \mathfrak{a}_{0} \partial_{\mathfrak{a}_{0}} - \overline{\mathfrak{a}_{0}} \partial_{\overline{\mathfrak{a}_{0}}}
\big) .
\end{align}
This shows that $\mathfrak{a}_0$ (the amplitude of the unique guided mode) has the same distribution as
$$
\mathfrak{a}_0(z) = a_{0,{\rm s}} \exp\Big( \frac{i}{2}  ( \kappa_{0}  - \Lambda^{s}_{0} ) z+ i 
\sqrt{\Gamma^1_{00}}   W^1_z - \frac{\Lambda_{0}  }{2}  z  \Big) ,
$$
where $W^1_z$ is a standard Brownian motion. The mode amplitude experiences a random phase modulation and a deterministic damping,
which both depend on frequency and two-point statistics of the medium perturbations \cite{garnier_evan}.

9) When $N\geq 2$, the limit process $  (\mathfrak{a}_{j}(z) )_{j=0}^{N-1}   $
can be identified as the solution of a system of stochastic differential equations
driven by Brownian motions.
\begin{corollary}
Let $(W^1_{j})_{j=0}^{N-1}$ be a $N$-dimensional correlated Brownian motion  with covariance function
$$
\EE \big[ W^1_{j,z} W^1_{j',z'}\big] =\Gamma^1_{jj'} z\wedge z' .
$$
Let $(W_{jl})_{0\leq j < l \leq N-1}$ and $(\tilde{W}_{jl})_{0\leq j < l \leq N-1}$ be independent standard Brownian motions.
Set $W_{lj}=W_{jl}$ and $\tilde{W}_{lj}=-\tilde{W}_{jl}$ for $j <l$.\\
Then the limit Markov process $  (\mathfrak{a}_{j}(z) )_{j=0}^{N-1}   $
has the same distribution as the unique solution of
$$
d  \mathfrak{a}_{j} = i  \mathfrak{a}_{j} \circ dW^1_{j,z} +  \sum_{l\neq j} \frac{\sqrt{\Gamma_{jl}}}{\sqrt{2}} \mathfrak{a}_{l} \circ \big(
i d W_{jl,z} - d \tilde{W}_{jl,z} \big) 
 + \frac{1}{2} \big( i\Gamma^s_{jj} -  \Lambda_j - i\Lambda^s_j +i \kappa_j\big)   \mathfrak{a}_{j} dz ,
$$
starting from $\mathfrak{a}_{j}(z=0) =a_{j,{\rm s}}$, $j=0,\ldots,N-1$, or, in It\^o's form:
\begin{align*}
d  \mathfrak{a}_{j} =& i  \mathfrak{a}_{j}   dW^1_{j,z} +  \sum_{l\neq j} \frac{\sqrt{\Gamma_{jl}}}{\sqrt{2}} \mathfrak{a}_{l}\big(
i  d W_{jl,z} -  d \tilde{W}_{jl,z} \big)  \\
&  + \frac{1}{2} \big( \Gamma_{jj} + i\Gamma^s_{jj} -\Gamma^1_{jj} -  \Lambda_j - i\Lambda^s_j +i \kappa_j\big)   \mathfrak{a}_{j} dz .
\end{align*}
\end{corollary}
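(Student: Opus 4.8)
The plan is to verify that the system of stochastic differential equations displayed in the statement is well-posed and that the infinitesimal generator of its solution coincides with $\mathcal{L}=\mathcal{L}^1+\mathcal{L}^2+\mathcal{L}^3$. Since $\mathcal{L}$ is, by the Proposition, the generator of the limit Markov process, uniqueness in law for the martingale problem associated with the linear operator $\mathcal{L}$ then identifies that process with the solution of the system. Well-posedness is immediate: the coefficients of the It\^o system are \emph{linear} in $(\mathfrak{a}_j)_{j=0}^{N-1}$, hence globally Lipschitz, so there is a unique strong solution issued from the deterministic datum $(a_{j,{\rm s}})_{j=0}^{N-1}$, with pathwise uniqueness and in particular uniqueness in law. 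Thus the whole content reduces to the generator identification.

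First I would confirm that the Stratonovich and It\^o displays describe the same process by computing the It\^o--Stratonovich correction for each Stratonovich differential. The correction attached to $i\mathfrak{a}_j\circ dW^1_{j,z}$ is $\tfrac12\,d\langle i\mathfrak{a}_j,W^1_{j}\rangle_z=-\tfrac12\Gamma^1_{jj}\mathfrak{a}_j\,dz$, using $\EE[W^1_{j,z}W^1_{j,z}]=\Gamma^1_{jj}z$ and the independence of $W^1$ from the other drivers. The correction attached to $\sum_{l\neq j}\tfrac{\sqrt{\Gamma_{jl}}}{\sqrt2}\,\mathfrak{a}_l\circ(i\,dW_{jl,z}-d\tilde W_{jl,z})$ involves only the $m=j$ martingale term of $d\mathfrak{a}_l$; exploiting $W_{lj}=W_{jl}$, $\tilde W_{lj}=-\tilde W_{jl}$ and the independence and normalization of $(W_{jl},\tilde W_{jl})$, each $l$ contributes $-\tfrac12\Gamma_{jl}\mathfrak{a}_j\,dz$, hence $\tfrac12\Gamma_{jj}\mathfrak{a}_j\,dz$ in total by the identity $\Gamma_{jj}=-\sum_{l\neq j}\Gamma_{jl}$. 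Adding these corrections to the Stratonovich drift $\tfrac12(i\Gamma^s_{jj}-\Lambda_j-i\Lambda^s_j+i\kappa_j)\mathfrak{a}_j$ yields the stated It\^o drift $\tfrac12(\Gamma_{jj}+i\Gamma^s_{jj}-\Gamma^1_{jj}-\Lambda_j-i\Lambda^s_j+i\kappa_j)\mathfrak{a}_j$, so the two forms coincide and I may work with whichever is convenient.

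Next, starting from the It\^o form and using the complex (Wirtinger) differentiation rules fixed in the Proposition, I would apply It\^o's formula to a smooth test function $f$ of $(\mathfrak{a}_j,\overline{\mathfrak{a}_j})_{j=0}^{N-1}$ and read off the generator from the finite-variation part of $df$. The first-order terms — the drift coefficient $\tfrac12(\Gamma_{jj}+i\Gamma^s_{jj}-\Gamma^1_{jj}-\Lambda_j-i\Lambda^s_j+i\kappa_j)$ multiplying $\mathfrak{a}_j\partial_{\mathfrak{a}_j}f$, plus its conjugate — reproduce the diagonal lines of $\mathcal{L}^1$ together with $\mathcal{L}^2$ and $\mathcal{L}^3$. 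The second-order terms require the quadratic covariations $d\langle\mathfrak{a}_j,\overline{\mathfrak{a}_l}\rangle$, $d\langle\mathfrak{a}_j,\mathfrak{a}_l\rangle$ and their conjugates: the part coming from the shared driver $W^1$ (covariance $\Gamma^1_{jl}$) produces the $\Gamma^1_{jl}$ double sum of $\mathcal{L}^1$ with the ``mixed'' monomials $\mathfrak{a}_j\overline{\mathfrak{a}_l}\partial_{\mathfrak{a}_j}\partial_{\overline{\mathfrak{a}_l}}$ and $\overline{\mathfrak{a}_j}\mathfrak{a}_l\partial_{\overline{\mathfrak{a}_j}}\partial_{\mathfrak{a}_l}$ and the two unmixed correction monomials, while the part coming from each independent pair $(W_{jl},\tilde W_{jl})$, again using $W_{lj}=W_{jl}$ and $\tilde W_{lj}=-\tilde W_{jl}$, produces the $\Gamma_{jl}{\bf 1}_{j\neq l}$ double sum of $\mathcal{L}^1$, the diagonal by-product having already been absorbed into the drift above. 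Matching the coefficients monomial by monomial gives $\mathcal{L}$, and weak uniqueness for the martingale problem closes the proof.

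The main difficulty I anticipate is bookkeeping rather than anything conceptual: one has to be scrupulous with the complex It\^o calculus — the factors of $i$, the placement of conjugates, the factors of $1/2$ in the Wirtinger derivatives, and the antisymmetry convention $\tilde W_{lj}=-\tilde W_{jl}$ — so that each off-diagonal covariation lands with the correct sign on the right one of the four quartic monomials $\mathfrak{a}_j\overline{\mathfrak{a}_j}\partial_{\mathfrak{a}_l}\partial_{\overline{\mathfrak{a}_l}}$, $\mathfrak{a}_l\overline{\mathfrak{a}_l}\partial_{\mathfrak{a}_j}\partial_{\overline{\mathfrak{a}_j}}$, $-\mathfrak{a}_j\mathfrak{a}_l\partial_{\mathfrak{a}_j}\partial_{\mathfrak{a}_l}$, $-\overline{\mathfrak{a}_j}\,\overline{\mathfrak{a}_l}\partial_{\overline{\mathfrak{a}_j}}\partial_{\overline{\mathfrak{a}_l}}$ of $\mathcal{L}^1$, and so that the $\Gamma^1$ driver generates the mixed and not the unmixed monomials. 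Once this has been carried out carefully, the identification of the generator — and hence, by uniqueness in law for the linear operator $\mathcal{L}$, of the law of $(\mathfrak{a}_j(z))_{j=0}^{N-1}$ — is complete.
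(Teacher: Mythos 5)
Your proposal is correct and takes essentially the same route as the paper, which dismisses the corollary as "a straightforward application of It\^o's formula": your well-posedness remark, Stratonovich-to-It\^o drift conversion, computation of the quadratic covariations, and identification of the resulting generator with ${\cal L}$ followed by uniqueness in law is precisely that computation written out. One incidental point worth flagging: if you do the first-order bookkeeping literally, the $\kappa_j$ part of the stated It\^o drift produces $\frac{i}{2}\kappa_j\big(\mathfrak{a}_j\partial_{\mathfrak{a}_j}-\overline{\mathfrak{a}_j}\partial_{\overline{\mathfrak{a}_j}}\big)$ rather than the $i\kappa_j$ appearing in ${\cal L}^3$, a factor-of-two inconsistency internal to the paper (its $N=1$ remark agrees with the $\frac{i}{2}\kappa_j$ normalization), not a gap in your argument.
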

The proof of the corollary is a straightforward application of It\^o's formula.

\section{The effective Markovian dynamics for the  mode powers}
\label{sec:effmarkov2}
From the result for the complex mode amplitudes we get the following result.
\begin{corollary}
The process $(| {a}_j^\eps(z)|^2)_{j=0}^{N-1}$ 
converges towards a Markov process ${\itbf P}(z)=(P_j(z))_{j=0}^{N-1}$ whose infinitesimal generator $\mathcal{L}_{\itbf P}$ writes:
\begin{align}
\mathcal{L}_{\itbf P} =& \sum_{j \neq l}  \Gamma_{jl} \Big[P_l P_j (\frac{\partial}{\partial P_j} - \frac{\partial}{\partial P_l}) \frac{\partial}{\partial P_j} + (P_l - P_j) \frac{\partial}{\partial P_j}\Big] -  \sum_{j=0}^{N-1} \Lambda_j P_j \frac{\partial}{\partial P_j} ,
\label{eq:genLP}
\end{align}
where $\Gamma_{jl}$ is defined by (\ref{def:Gammalj}) and $\Lambda_j $ is defined by (\ref{def:Lambdaj}).
\end{corollary}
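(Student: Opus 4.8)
\noindent\emph{Sketch of the argument.} The plan is to derive the limit and its law from the Proposition of Section~\ref{sec:effmarkov1} by a continuous mapping argument, then to obtain the Markov property of the powers from the algebraic closure property recorded in Remark~6 following that Proposition, and finally to carry out the Wirtinger-calculus computation that produces the explicit form (\ref{eq:genLP}). First, by the Proposition together with Remark~2 following it, $(a_j^\eps(z))_{j=0}^{N-1}$ converges in distribution in $\mathcal C^0([0,L],\CC^N)$ to the diffusion $(\mathfrak a_j(z))_{j=0}^{N-1}$ with generator $\mathcal L$. The map $\Phi:\CC^N\to\RR_+^N$, $\Phi(\zeta)=(|\zeta_0|^2,\dots,|\zeta_{N-1}|^2)$, is continuous, hence so is the induced map $\mathcal C^0([0,L],\CC^N)\to\mathcal C^0([0,L],\RR_+^N)$; by the continuous mapping theorem $(|a_j^\eps(z)|^2)_{j=0}^{N-1}$ converges in distribution in $\mathcal C^0([0,L],\RR_+^N)$ to ${\itbf P}(z):=(|\mathfrak a_j(z)|^2)_{j=0}^{N-1}$. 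It then remains to show that ${\itbf P}$ is Markov with the announced generator.

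Second, I would establish the Markov property via the martingale problem. Given $g\in\mathcal C_c^\infty(\RR^N)$, set $f(\zeta)=g(\Phi(\zeta))$; Remark~6 says that $\mathcal L f$ is again a function of $(|\zeta_j|^2)_j$ only, i.e. $\mathcal L f=(\mathcal L_{\itbf P}g)\circ\Phi$ for a well-defined operator $\mathcal L_{\itbf P}$. Since $(\mathfrak a_j)$ solves the martingale problem for $\mathcal L$, the process $g({\itbf P}(z))-\int_0^z(\mathcal L_{\itbf P}g)({\itbf P}(s))\,ds=f(\mathfrak a(z))-\int_0^z(\mathcal L f)(\mathfrak a(s))\,ds$ is a martingale for the filtration generated by $(\mathfrak a_j)$; being adapted to the coarser filtration generated by ${\itbf P}$, it is a martingale for the latter as well. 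Hence ${\itbf P}$ solves the martingale problem for $\mathcal L_{\itbf P}$, and well-posedness is inherited from that of $\mathcal L$ (equivalently, one may apply It\^o's formula to $P_j=|\mathfrak a_j|^2$ in the stochastic differential equation of the Corollary of Section~\ref{sec:effmarkov1} and check that the resulting system for $(P_j)$ is closed and well-posed). Thus ${\itbf P}$ is the Markov process with generator $\mathcal L_{\itbf P}$.

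Third, I would compute $\mathcal L_{\itbf P}$ explicitly by evaluating $\mathcal L=\mathcal L^1+\mathcal L^2+\mathcal L^3$ on $f(\zeta)=g(P_0,\dots,P_{N-1})$ with $P_j=\mathfrak a_j\overline{\mathfrak a_j}$, using the chain rule $\partial_{\mathfrak a_j}f=\overline{\mathfrak a_j}\,\partial_{P_j}g$, $\partial_{\overline{\mathfrak a_j}}f=\mathfrak a_j\,\partial_{P_j}g$ and the induced second-order identities. The effective simplifications are: (i) every operator $\mathfrak a_j\partial_{\mathfrak a_j}-\overline{\mathfrak a_j}\partial_{\overline{\mathfrak a_j}}$ annihilates $f$, so the coefficients $\Gamma^s_{jj}$, $\Lambda^s_j$ and $\kappa_j$ drop out (in particular $\mathcal L^3$ contributes nothing); (ii) $\mathfrak a_j\partial_{\mathfrak a_j}+\overline{\mathfrak a_j}\partial_{\overline{\mathfrak a_j}}$ acts as $2P_j\partial_{P_j}$, so $\mathcal L^2$ contributes $-\sum_j\Lambda_jP_j\partial_{P_j}g$; (iii) in the $\Gamma^1$ block of $\mathcal L^1$ the off-diagonal ($j\neq l$) terms cancel identically on $f$ and the diagonal ones give $\sum_j\Gamma^1_{jj}P_j\partial_{P_j}g$, which is cancelled by the $-\Gamma^1_{jj}$ part of the drift block of $\mathcal L^1$; (iv) using $\Gamma_{jl}=\Gamma_{lj}$ (from $C_{j,l}=C_{l,j}$ and the evenness of the cosine) and $\Gamma_{jj}=-\sum_{l\neq j}\Gamma_{jl}$, the residual drift term $\sum_j\Gamma_{jj}P_j\partial_{P_j}g=-\sum_{j\neq l}\Gamma_{jl}P_j\partial_{P_j}g$ combines with the first-order contribution $\sum_{j\neq l}\Gamma_{jl}P_l\partial_{P_j}g$ of the $\Gamma_{jl}$ block (obtained after a symmetrization in $j\leftrightarrow l$) to give $\sum_{j\neq l}\Gamma_{jl}(P_l-P_j)\partial_{P_j}g$, while the second-order contribution of that block is $\sum_{j\neq l}\Gamma_{jl}P_jP_l\big(\partial^2_{P_jP_j}g-\partial^2_{P_jP_l}g\big)=\sum_{j\neq l}\Gamma_{jl}P_lP_j(\partial_{P_j}-\partial_{P_l})\partial_{P_j}g$. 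Collecting (ii)--(iv) reproduces exactly (\ref{eq:genLP}).

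The only genuinely non-routine point is the reduction in the second step: that the image of the well-posed diffusion $(\mathfrak a_j)$ under the non-injective map $\Phi$ is again a Markov process, and with the announced generator. This rests on the algebraic closure property of $\mathcal L$ on functions of the powers (Remark~6) and on the filtration argument above — or, alternatively, on writing and closing the It\^o system for $(P_j)$ directly from the Corollary of Section~\ref{sec:effmarkov1}. The third step is a long but entirely mechanical computation; the only delicate bookkeeping is the $\Gamma^1$ cancellation and the symmetrizations that bring the $\mathcal L^1$ coefficients into the form (\ref{eq:genLP}).
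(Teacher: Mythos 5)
Your proposal is correct and follows the same route the paper takes (implicitly, via Remark~6 after the Proposition of Section~\ref{sec:effmarkov1}): apply $\mathcal L=\mathcal L^1+\mathcal L^2+\mathcal L^3$ to test functions of the powers $P_j=\mathfrak a_j\overline{\mathfrak a_j}$, observe closure, and read off the generator, with the convergence of $(|a_j^\eps|^2)_j$ obtained from the amplitude convergence. Your Wirtinger computation (cancellation of the $\Gamma^s$, $\Lambda^s$, $\kappa$ and $\Gamma^1$ contributions, and the symmetrization using $\Gamma_{jl}=\Gamma_{lj}$ and $\Gamma_{jj}=-\sum_{l\neq j}\Gamma_{jl}$) checks out and reproduces (\ref{eq:genLP}) exactly.
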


The coefficients $\Gamma_{jl}$ describe the effective mode coupling between guided modes due to random scattering.
The coefficients $\Lambda_j$ are effective mode-dependent dissipation coefficients and they come
from the coupling between guided and radiative modes due to random scattering.

From the form of the generator ${\cal L}_{\itbf P}$, we can establish that the $n$th-order moments 
of the mode powers satisfy closed equations. We will apply this to compute the first moments of $\itbf P$, 
as well as its second moments later in Section~\ref{sec:fluctuationanalysis}.

Using (\ref{eq:genLP}) we find that the mean mode powers
\begin{equation}
\label{def:meanmodepowers}
{Q}_j(z)=
\EE [P_j(z)]  %= \lim_{\eps \to 0} \EE \big[ |\mathfrak{a}_j^\eps(z)|^2\big]
\end{equation}
satisfy the closed system of equations
\begin{equation}
\label{eq:momQ}
\partial_z {Q}_j = -\Lambda_j {Q}_j +\sum_{l=0}^{N-1} \Gamma_{jl} \big({Q}_l-{Q}_j\big)  ,
\end{equation}
starting from ${Q}_j(0)=|{a}_{j,{\rm s}}|^2$.
The form of these coupled-mode equations is well-known \cite{dozier} although the mode-dependent attenuation terms $\Lambda_j$ 
are usually introduced heuristically.
The solution explicitly writes:
\begin{align}
\label{eq:mom1b}
{\itbf Q}(z) = \exp({\bf A} z) {\itbf Q}(0)  ,
\end{align}
with the matrix ${\bf A}$ defined by ($\delta_{jl}$ is the Kronecker symbol): % and $\Gamma_{jj} = -\sum_{l'\neq j}\Gamma_{jl'} $):
\begin{align}
{\bf A} := (\Gamma_{jl} - \Lambda_j \delta_{jl})_{j,l=0}^{N-1} .
\end{align}
We can also remark that (\ref{eq:momQ}) with $\Lambda_j=0$ can be interpreted as the Kolmogorov equation associated to a random walk on the finite space $\{0,\ldots,N-1\}$. If we denote by $(J_z)_{z\geq 0}$ the Markov process on the state space $\{0,\ldots,N-1\}$ with infinitesimal generator 
$\boldsymbol{\Gamma}$, then {a} Feynman-Kac formula gives the following probabilistic representation of the mean mode powers $Q_j(z)$:
$$
Q_j(z) = \EE \Big[  |{a}_{J_z,{\rm s}}|^2 \exp\Big( -  \int_0^z \Lambda_{J_{z'}} dz' \Big) \Big| J_0=j\Big]  .
$$
This representation makes it possible to anticipate the result derived below in the continuum approximation (when the number of modes becomes large), namely that the $Q_j$'s can be approximated by the solution of a diffusion equation, because the normalized random walk $(J_z/N)_{z\geq 0}$ can be approximated in distribution by a diffusion process on $[0,1]$.

\section{Long-range behavior of the mean mode powers}
\label{sec:effmarkov3}
From now on we assume that the symmetric matrix $\boldsymbol{\Gamma}$ 
defined by $\Gamma_{jl}$ given by (\ref{def:Gammalj}) for $j\neq l$,  $\Gamma_{jj} = -\sum_{l'\neq j}\Gamma_{jl'} $, is irreducible. 
We consider the matrix  ${\bf A}=\boldsymbol{\Gamma} -\boldsymbol{\Phi} $,
with  $\Phi_{jl} = \Lambda_j \delta_{jl}$.
By Perron-Frobenius theorem, the first eigenvalue of ${\bf A}$ is simple and  nonpositive (we denote it by $-\lambda$) 
and the components of the corresponding  unit eigenvector
${\itbf V} $ have all the same sign  (so we can assume that they are positive).
By (\ref{eq:mom1b}) we get the following result.

\begin{proposition}
The mean mode powers (\ref{def:meanmodepowers}) satisfy
\begin{equation}
{Q}_j(z) \stackrel{z \to +\infty}{\simeq}
c_V V_j  \exp\big( -\lambda  z \big) \big(1+o(1)\big)  ,
\end{equation}
where $(-\lambda,{\itbf V})$ is the first eigenvalue/eigenvector of ${\bf A}$ and
\begin{equation}
c_V = \sum_{l=0}^{N-1} V_l  |{a}_{l,{\rm s}}|^2  .
\end{equation}
\end{proposition}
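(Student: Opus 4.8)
The plan is to combine the explicit solution (\ref{eq:mom1b}), ${\itbf Q}(z)=\exp({\bf A}z)\,{\itbf Q}(0)$ with ${\itbf Q}(0)=(|{a}_{l,{\rm s}}|^2)_{l=0}^{N-1}$, with the spectral structure of ${\bf A}=\boldsymbol{\Gamma}-\boldsymbol{\Phi}$. The first step is to record two properties of ${\bf A}$ beyond what is already stated just before the proposition. First, ${\bf A}$ is real \emph{symmetric}: $\boldsymbol{\Gamma}$ is symmetric because $C_{j,l}$, and hence ${\cal R}_{jl}$, is symmetric in $(j,l)$ and the integral kernel in (\ref{def:Gammalj}) is invariant under $j\leftrightarrow l$, while $\boldsymbol{\Phi}$ is diagonal. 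Second, the off-diagonal entries of ${\bf A}$ are nonnegative: for $j\neq l$ one has $\Gamma_{jl}=\frac{k^4}{4\beta_j\beta_l}\widehat{\cal R}_{jl}(\beta_l-\beta_j)\geq 0$ by Bochner's theorem, ${\cal R}_{jl}$ being an autocovariance and hence of positive type. Since $\boldsymbol{\Gamma}$ is assumed irreducible and adding a diagonal does not change the adjacency graph, ${\bf A}$ is an irreducible symmetric Metzler matrix; applying Perron--Frobenius to ${\bf A}+c\,{\rm Id}$ for $c$ large then gives that its largest eigenvalue $-\lambda$ is simple with a positive unit eigenvector ${\itbf V}$, as already used in the statement, and pairing ${\bf A}{\itbf V}=-\lambda{\itbf V}$ with the all-ones vector together with $\sum_j\Gamma_{jl}=0$ and $\Lambda_l\geq 0$ yields $-\lambda\sum_j V_j=-\sum_l\Lambda_l V_l\leq 0$, i.e. $\lambda\geq 0$.

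The second step is to diagonalise ${\bf A}$ in an orthonormal eigenbasis, ${\bf A}=\sum_{m=0}^{N-1}\mu_m\,{\itbf u}_m{\itbf u}_m^{\top}$ with $\mu_0=-\lambda>\mu_1\geq\cdots\geq\mu_{N-1}$, the first inequality being strict because the Perron eigenvalue is simple, and ${\itbf u}_0={\itbf V}$, and then to read off
\[
{\itbf Q}(z)=\exp({\bf A}z)\,{\itbf Q}(0)=e^{-\lambda z}\big({\itbf V}^{\top}{\itbf Q}(0)\big){\itbf V}+\sum_{m=1}^{N-1}e^{\mu_m z}\big({\itbf u}_m^{\top}{\itbf Q}(0)\big){\itbf u}_m .
\]
Componentwise this reads $Q_j(z)=c_V V_j e^{-\lambda z}+R_j(z)$ with $c_V={\itbf V}^{\top}{\itbf Q}(0)=\sum_{l=0}^{N-1}V_l|{a}_{l,{\rm s}}|^2$ and $|R_j(z)|\leq C e^{\mu_1 z}$ for a suitable constant $C$. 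Since ${\itbf V}$ has strictly positive components and ${\itbf Q}(0)$ is nonnegative and nonzero (the source excites at least one guided mode, so $a_{l,{\rm s}}\neq 0$ for some $l$), $c_V>0$, so the leading term $c_V V_j e^{-\lambda z}$ does not vanish, and $R_j(z)/(c_V V_j e^{-\lambda z})=O\big(e^{(\mu_1+\lambda)z}\big)\to 0$ as $z\to+\infty$ because $\mu_1<-\lambda$. This is exactly the claimed equivalence.

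The only genuinely substantive point is the verification that ${\bf A}$ is an irreducible Metzler matrix, since that is what licenses Perron--Frobenius and, above all, supplies the strict spectral gap $\mu_1<-\lambda$ which makes every subdominant contribution decay strictly faster than the Perron mode; once that is in place the conclusion is mere bookkeeping with the exponential already furnished by (\ref{eq:mom1b}). If one prefers not to rely on the symmetry of ${\bf A}$, the same conclusion follows from the general (non-symmetric) Perron--Frobenius theorem: write $\exp({\bf A}z)=e^{-\lambda z}\Pi+\exp({\bf A}z)({\rm Id}-\Pi)$ with $\Pi$ the rank-one spectral projector onto the Perron eigenspace; the second term is $O\big((1+z)^{N}e^{\mu_1' z}\big)$ with $\mu_1'=\max\{{\rm Re}\,\mu:\mu\neq-\lambda\}<-\lambda$, and the polynomial factor is harmless since $\mu_1'+\lambda<0$.
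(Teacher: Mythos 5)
Your proof is correct and follows essentially the same route as the paper: Perron--Frobenius applied to the symmetric irreducible matrix ${\bf A}=\boldsymbol{\Gamma}-\boldsymbol{\Phi}$ (the paper invokes it directly, you justify its applicability via the nonnegativity $\Gamma_{jl}=\frac{k^4}{4\beta_j\beta_l}\hat{\cal R}_{jl}(\beta_l-\beta_j)\geq 0$ from Bochner's theorem), combined with the explicit solution ${\itbf Q}(z)=\exp({\bf A}z){\itbf Q}(0)$ and the spectral gap to isolate the Perron mode. The extra details you supply (simplicity of the top eigenvalue, $\lambda\geq 0$, $c_V>0$) are exactly the points the paper leaves implicit, so there is nothing to correct.
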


In the following we discuss special cases where explicit expressions can be obtained.

{\bf No effective dissipation.}
If there is no effective dissipation $\boldsymbol{\Phi} ={\bf 0}$, 
then the first eigenvalue/eigenvector $(-\lambda^{(0)},{\itbf V}^{(0)})$ of the matrix
$\boldsymbol{\Gamma} $ is 
\begin{equation}
\label{eq:defV0}
\lambda^{(0)}=0,\quad
{\itbf V}^{(0)} = \big(1/\sqrt{N} \big)_{j=0}^{N-1}, 
\end{equation}
which gives the standard equipartition result \cite{creamer,FGPSbook,GP07}:
\begin{equation}
\label{eq:equip1}
{Q}_j(z) \stackrel{z \to +\infty}{\longrightarrow}
\frac{1}{N}  \sum_{l=0}^{N-1}  |{a}_{l,{\rm s}}|^2  ,\quad \forall j= 0,\ldots,N-1 .
\end{equation}
The total input power $ \sum_{l=0}^{N-1}  |{a}_{l,{\rm s}}|^2  $ becomes equipartitioned amongst all guided  modes.

{\bf Weak effective dissipation.}
We next consider the case when the effective dissipation is weak, 
that is to say,  the matrix $\boldsymbol{\Phi} $ is much smaller
  than the matrix $\boldsymbol{\Gamma} $, with a typical ratio of the order of $\theta$.
We then assume that
$\Lambda_j = \theta \Lambda_j^{(1)}  ,$
with $\theta \ll 1$.
Then we can write $\boldsymbol{\Phi}  =\theta \boldsymbol{\Phi}^{(1)}$ 
with $\Phi_{jl}^{(1)} =
\Lambda_j^{(1)} \delta_{jl}$
and $\boldsymbol{\Gamma}  = \boldsymbol{\Gamma}^{(0)}$
and the first eigenvalue/eigenvector  $(-\lambda,{\itbf V} )$  of the matrix
$\boldsymbol{\Gamma} -\boldsymbol{\Phi} $ can be expanded
as
$$
 \lambda=\lambda^{(0)}+ \theta \lambda^{(1)} +\theta^2 \lambda^{(2)} +O(\theta^3),\quad
 {\itbf V}  = {\itbf V}^{(0)} +\theta {\itbf V}^{(1)} +O(\theta^2)
$$
with $(\lambda^{(0)}, {\itbf V}^{(0)})$ given by (\ref{eq:defV0}),
\begin{align}
\label{eq:lamdab1sc}
\lambda^{(1)} =& {{\itbf V}^{(0)}}^T \boldsymbol{\Phi}^{(1)} {\itbf V}^{(0)}
=
\frac{1}{N}\sum_{j=0}^{N-1} \Lambda_j^{(1)},\\
\lambda^{(2)} =& {{\itbf V}^{(0)}}^T 
%\boldsymbol{\Gamma}^{(0)} 
\boldsymbol{\Phi}^{(1)}
{\itbf V}^{(1)} ,
\end{align}
and $ {\itbf V}^{(1)}$ is solution of $ \boldsymbol{\Gamma}^{(0)}  {\itbf V}^{(1)} = (\boldsymbol{\Phi}^{(1)} - \lambda^{(1)} )
{\itbf V}^{(0)}$ and is orthogonal to $ {\itbf V}^{(0)}$.
If, for instance, $\Gamma_{jl}\equiv \Gamma >0$ for all $j\neq l$, then
\begin{equation}
\label{eq:V1sc}
{\itbf V}^{(1)} = -\frac{1}{\Gamma N^{3/2}} \big(\Lambda_j^{(1)} -\lambda^{(1)} \big)_{j=0}^{N-1}
\end{equation}
%$$
%{\itbf V}^{(1)} = -\frac{1}{\Gamma N^{3/2}} \big(\Lambda_j^{(1)} \big)_{j=0}^{N-1}
%$$
and
\begin{equation}
\lambda^{(2)} = 
- \frac{1}{\Gamma N^2} \sum_{j=0}^{N-1} (\Lambda_j^{(1)} -\lambda^{(1)})^2 .
\label{eq:lamdab2sc}
\end{equation}
Eqs.~(\ref{eq:defV0})-(\ref{eq:lamdab1sc}) show that, when coupling is stronger than dissipation, 
then the effective damping of the mean mode powers is approximately the arithmetic 
average of the effective mode-dependent damping coefficients. 
Eqs.~(\ref{eq:defV0})-(\ref{eq:V1sc}) show that the distribution of the mean mode powers is approximately equipartitioned,
with reduced allocations for the modes with the strongest damping coefficients.

{\bf Weak coupling.}
We next consider the case when the coupling is weak, that is to say,
 the matrix $\boldsymbol{\Gamma} $  is much smaller
  than the matrix $\boldsymbol{\Phi} $, with a typical ratio of the order of $\theta$.
We then assume that
$
\Gamma_{jl} = \theta \Gamma_{jl}^{(1)}  ,
$
with $\theta \ll 1$.
We also assume that the dissipation coefficients have a unique minimum 
\begin{equation}
\label{eq:uniquemin}
 \Lambda_{j_\star}  = \min_{j=0,\ldots,N-1} (\Lambda_j ), \quad \quad \Lambda_j > \Lambda_{j_\star} \quad \forall j \neq j_\star.
\end{equation}
Then we can write $\boldsymbol{\Phi}  = \boldsymbol{\Phi}^{(0)}$ and $\boldsymbol{\Gamma}  = \theta \boldsymbol{\Gamma}^{(1)}$
and the first eigenvalue/eigenvector  $(-\lambda,{\itbf V} )$  of the matrix
$\boldsymbol{\Gamma} -\boldsymbol{\Phi} $ can be expanded
as
$$
 \lambda=   \lambda^{(0)} +\theta \lambda^{(1)} +\theta^2 \lambda^{(2)} +O(\theta^3),
\quad
 {\itbf V}  = {\itbf V}^{(0)} +\theta {\itbf V}^{(1)} +O(\theta^2)
  ,
  $$
with
\begin{eqnarray}
\label{eq:lamdab1wc}
\lambda^{(0)} &=&  \Lambda_{j_\star} ,\\
V^{(0)}_j &=& \delta_{jj_\star},\\
\lambda^{(1)} &=&
- {{\itbf V}^{(0)}}^T \boldsymbol{\Gamma}^{(1)} {\itbf V}^{(0)} =
-\Gamma_{j_\star j_\star}^{(1)}= \sum_{j \neq j_\star}  \Gamma_{jj_\star}^{(1)} ,\\
V^{(1)}_j &=& \frac{\Gamma_{jj_\star}^{(1)}}{\Lambda_j-\Lambda_{j_\star}} \quad \forall j \neq j_\star,  \quad  V^{(1)}_{j_\star}=0 ,\\
\lambda^{(2)} &=& 
-{{\itbf V}^{(0)}}^T \boldsymbol{\Gamma}^{(1)} {\itbf V}^{(1)} 
=
- \sum_{j \neq j_\star} \frac{(\Gamma_{jj_\star}^{(1)})^2}{\Lambda_j-\Lambda_{j_\star}} .
\end{eqnarray}
Eq.~(\ref{eq:lamdab1wc}) shows that, when coupling is weaker than dissipation, then
the effective damping of the mean mode powers is approximately the minimum of the effective mode-dependent damping coefficients.
The distribution of the mean mode powers is, moreover, concentrated on the mode corresponding to the minimal damping coefficient.

{\bf Continuum approximation.}
Here we want to address situations where the coupling between guided modes is via nearest neighbors
and the number of modes is large.

The number of modes becomes large when $(n^2-1)k^2 d^2 \gg 1$ (see (\ref{eq:numberofmodes})). In other words, the number of modes is large when the frequency is large because it is proportional to the ratio of the waveguide diameter over the wavelength.

For type II perturbations, coupling via nearest neighbors happens when the fluctuations of the boundaries are smooth so that the 
Fourier transform of ${\cal R}_{\rm II}$ decays fast and the correlation radius is larger than the core diameter (which is much larger than the wavelength). 
Under such circumstances,
we have $\beta_j-\beta_{j+l}\simeq \frac{\pi \sqrt{n^2-1}}{n d}\frac{j}{N} l$ for any $l\geq 1$ (see Appendix \ref{app:dec}), 
$|\hat{\cal R}_{\rm II}(\beta_j-\beta_{j+1})| \gg  |\hat{\cal R}_{\rm II}(\beta_j-\beta_{j+l})|$ for any $l\geq 2$, and
we can approximate the matrix $\boldsymbol{\Gamma}$ for all $l\neq j$ by:
\begin{equation}
\label{eq:approxGammajl}
\Gamma_{jl} =
\left\{
\begin{array}{ll}
\gamma_{\min(l,j)}
& \mbox{ if } |j-l|=1,\\
0 & \mbox{ if } |j-l|\geq 2,
\end{array}
\right.
\end{equation}
with
\begin{equation}
\label{def:gammajII}
\gamma_j = \frac{k^2 (n^2-1)^2 d^2}{2\beta_j \beta_{j+1}}  [\phi_j\phi_{j+1}](\frac{d}{2})^2  \hat{\cal R}_{\rm II}(\beta_j-\beta_{j+1})  .
\end{equation}

For type I perturbations, coupling via nearest neighbors  happens under similar conditions. Indeed, let us assume that ${\cal R}_{\rm I}(x,x',z)$
can be factorized as
$$
{\cal R}_{\rm I}(x,x',z) = {\cal R}_{I,{\rm c}}(x,x') {\cal R}_{I,{\rm l}} (z) ,
$$
then for all $l\neq j$:
$$
\Gamma_{jl} = \frac{k^2}{4\beta_j\beta_l} \hat{\cal R}_{I,{\rm l}} (\beta_j-\beta_l) \iint_{[-d/2,d/2]^2} \! {\cal R}_{I,{\rm c}}(x,x') \phi_j \phi_l(x) \phi_j \phi_l(x') dx dx' ,
$$
where $ \hat{\cal R}_{I,{\rm l}}$ is the Fourier transform of ${\cal R}_{I,{\rm l}}$.
Again, if the fluctuations of the index of refraction are smooth so that 
$\hat{\cal R}_{I,{\rm l}}$ decays fast and the longitudinal correlation radius is larger than the core diameter,
then we can approximate $\Gamma_{jl}$ by (\ref{eq:approxGammajl}) with
\begin{equation}
\label{def:gammajI}
\gamma_j = 
 \frac{k^2}{4\beta_j\beta_{j+1}} \hat{\cal R}_{I,{\rm l}} (\beta_j-\beta_{j+1}) \iint_{[-d/2,d/2]^2} \! {\cal R}_{I,{\rm c}}(x,x') \phi_j \phi_{j+1}(x) \phi_j \phi_{j+1}(x') dx dx' .
\end{equation}
Similarly, we find that $\Lambda_j$ can be approximated by
$$
\Lambda_j = \left\{\
\begin{array}{ll}
 \Lambda_{N-1} & \mbox{ if } j=N-1,\\
 0 & \mbox{ otherwise}.
 \end{array}
 \right.
$$
Other circumstances can lead to the same conclusions. For instance the band-limiting idealization hypothesis in \cite{gomez} gives the same result,
and it is based on the behavior of the transverse covariance function ${\cal R}_{I,{\rm c}}$.

The coupled mode power equations then read
\begin{align}
\label{eq:QNNa}
\partial_z Q_{N-1}^{(N)} &= - \Lambda_{N-1}^{(N)} Q_{N-1}^{(N)}+\gamma_{N-2}^{(N)} (Q_{N-2}^{(N)}-Q_{N-1}^{(N)}) ,\\
\partial_z Q_j^{(N)} &= \gamma_{j-1}^{(N)}   (Q_{j-1}^{(N)}-Q_j^{(N)})   +\gamma_j^{(N)} (Q_{j+1}^{(N)}-Q_j^{(N)})  \mbox{ for } 1\leq j \leq N-2,\\
\partial_z Q_0^{(N)} &= \gamma_0^{(N)}(Q_1^{(N)}-Q_0^{(N)}).
\label{eq:QNNb}
\end{align}
The superscript $(N)$ is added to remember that all coefficients depend on $N$.
We have nearest-neighbor
coupling: The $j$th mode can exchange power with the $l$th mode only if they are neighbors, that is, if they satisfy $|j-l|\leq 1$.

\begin{figure}
\begin{center}
\begin{tikzpicture}[scale=7]
\draw [->] (-0.1,0) --  (0.9,0) ;
\draw (0.9,0) node[right] {$j$};
\draw [very thick] (0,0) --  (0.8,0) ;

\draw (0,0) node{$\bullet$};
\draw (0,0) node[below] {$0$};
\draw[->,>=latex] (0,0)  to[bend  left] (0.1 , 0);
\draw (0.08,0) node[above] {$\gamma_{0}$};

\draw (0.4,0) node{$\bullet$};
\draw (0.4,0) node[below] {$j$};
\draw[->,>=latex] (0.4,0)  to[bend  left] (0.5 , 0);
\draw (0.48,0) node[above] {$\gamma_{j}$};
\draw[->,>=latex] (0.4,0)  to[bend  right] (0.3 , 0);
\draw (0.32,0) node[above] {$\gamma_{j-1}$};

\draw (0.8,0) node{$\bullet$};
\draw (0.8,0) node[below] {$N-1$};
\draw[->,>=latex] (0.8,0)  to[bend  right] (0.7 , 0);
\draw (0.68,0) node[above] {$\gamma_{N-2}$};
\end{tikzpicture}

\end{center}
\caption{The transition rates of the jump Markov process $(J_z)_{z\geq 0}$ on the state space $ \{0 \leq j  \leq N-1\}$. 
The absorption is concentrated on the point $j=N-1$. }
\label{fig:markov1}
\end{figure}
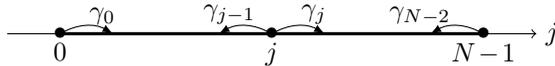

We note that the terms involving $\gamma_j^{(N)}$ in (\ref{eq:QNNa}-\ref{eq:QNNb})
define the infinitesimal generator of a random walk on the finite set $\{0 \leq j \leq N-1\}$ (see Figure \ref{fig:markov1}).
As shown in \cite{gomez}, following the ideas developed in \cite[Chapter 11]{sv},
if $\gamma_j^{(N)}$ converges in the sense that
$\gamma_{\lfloor xN\rfloor}^{(N)} \to \gamma_\infty( x)$ for any $x \in (0,1)$, where $\gamma_\infty$ is smooth and positive,
then, for any function $\varphi$ in $L^2(0,1)$,
the function 
$$
Q^{(N)}_\varphi (z,u) = \sum_{j=0}^{N-1} Q^{(N)}_{j} (z) \varphi(\lfloor j/N\rfloor) ,
$$
where $Q^{(N)}_j$ is the solution of (\ref{eq:QNNa}-\ref{eq:QNNb}) starting from $Q^{(N)}_j(z=0)=\delta_{j , \lfloor u N \rfloor}$,
satisfies
$$
\lim_{N \to \infty} Q^{(N)}_\varphi (z,u)  = Q_\varphi(z,u) ,
$$
where $Q_\varphi$ is the solution of the partial differential equation:
\begin{equation}
\partial_z Q_\varphi = {\cal H}_1 Q_\varphi, \quad \quad 
 {\cal H}_1 = \partial_u \big( \gamma_\infty(u)\partial_u \cdot \big)  ,
\end{equation}
with the mixed Neumann-Dirichlet boundary conditions
\begin{equation}
\partial_u Q_\varphi(z,0)=0, \quad
Q_\varphi(z,1)=0, \quad
Q_\varphi(0,u)=\varphi(u).
\end{equation}
For type I perturbations we have from  (\ref{def:gammajI}):
\begin{align*}
\gamma_\infty(u) =& 
\frac{1}{4 (n^2-1)d^2 (\frac{n^2}{n^2-1}-u^2) }  
 \hat{\cal R}_{I,{\rm l}}\big( \frac{\pi \sqrt{n^2-1}}{n d} u\big)   \\
& \times
  \iint_{[-d/2,d/2]^2} \! {\cal R}_{I,{\rm c}}(x,x')  \sin( \pi x /d)  \sin(\pi x'/d) dx dx'.
\end{align*}
For type II perturbations, we have from (\ref{def:gammajII}):
$$
\gamma_\infty(u) = 
\frac{2 (n^2-1) u^4}{ (\frac{n^2}{n^2-1}-u^2) }  
 \hat{\cal R}_{\rm II}\big( \frac{\pi \sqrt{n^2-1}}{n d} u\big)  .
$$
As a consequence of this result we get the following result.
\begin{proposition}
\label{prop:lambdacont}
In the continuum approximation, the first eigenvalue $ - \lambda^{(N)}$ of the matrix ${\bf A}^{(N)}$ converges to $-\lambda$ as $N\to \infty$,
with
\begin{equation}
\lambda = \inf_{\varphi \in {\cal D}_1}
\int_0^1 \gamma_\infty(u) \varphi'(u)^2 du
\end{equation}
and
\begin{equation}
{\cal D}_1 = \Big\{ \varphi \in {\cal C}^\infty([0,1]) , \, \int_0^1 \varphi(u)^2 du=1, \, \varphi'(0)=0 , \,  \varphi(1)=0 \Big\} .
\end{equation}
\end{proposition}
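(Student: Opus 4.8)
The plan is to read $\lambda$ and $\lambda^{(N)}$ as the ground-state energies of, respectively, the self-adjoint operator $-{\cal H}_1$ and of its finite-difference discretizations, and to obtain the convergence by a two-sided variational argument (a $\Gamma$-convergence of the associated Dirichlet forms). The starting observation is that, in the nearest-neighbour regime (\ref{eq:approxGammajl}), the matrix ${\bf A}^{(N)}=\boldsymbol{\Gamma}^{(N)}-\boldsymbol{\Phi}^{(N)}$ is \emph{symmetric} (since $\Gamma^{(N)}_{j,j+1}=\Gamma^{(N)}_{j+1,j}=\gamma^{(N)}_j$ and $\boldsymbol{\Phi}^{(N)}=\mathrm{diag}(0,\dots,0,\Lambda^{(N)}_{N-1})$) and irreducible, so by Perron-Frobenius its top eigenvalue $-\lambda^{(N)}$ is simple and, by Courant-Fischer, $\lambda^{(N)}=\min_{{\itbf x}\neq 0}q_N({\itbf x})/m_N({\itbf x})$, where $q_N({\itbf x})=\sum_{j=0}^{N-2}\gamma^{(N)}_j(x_{j+1}-x_j)^2+\Lambda^{(N)}_{N-1}x_{N-1}^2$ is the discrete Dirichlet energy with a boundary penalty at $j=N-1$ and $m_N({\itbf x})=\sum_{j=0}^{N-1}x_j^2$ is the discrete mass (all understood in the range normalization of the continuum approximation, under which the coupled power system converges to $\partial_z Q_\varphi={\cal H}_1 Q_\varphi$; concretely the coupling rates are seen at scale $N^2$). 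On the other side, $-{\cal H}_1=-\partial_u(\gamma_\infty\partial_u\,\cdot)$ with Neumann condition at $0$ and Dirichlet condition at $1$ is a non-negative self-adjoint operator with compact resolvent, and, the smooth functions vanishing at $1$ being dense in its form domain, its lowest eigenvalue is exactly $\lambda=\inf_{\varphi\in{\cal D}_1}\int_0^1\gamma_\infty\varphi'^2$. Thus the claim is that the ground-state energies of $q_N/m_N$ converge to that of $\int\gamma_\infty\varphi'^2/\int\varphi^2$.

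For the \emph{upper bound} $\limsup_N\lambda^{(N)}\le\lambda$, I would fix $\varphi\in{\cal D}_1$ and test with ${\itbf x}=(\varphi(j/N))_j$. A Taylor expansion $x_{j+1}-x_j=\varphi'(j/N)/N+O(N^{-2})$ together with the hypothesis $\gamma^{(N)}_{\lfloor uN\rfloor}\to\gamma_\infty(u)$ turns $q_N$ and $m_N$, after the relevant $N$-rescaling, into Riemann sums converging to $\int_0^1\gamma_\infty\varphi'^2$ and $\int_0^1\varphi^2=1$, while the boundary penalty $\Lambda^{(N)}_{N-1}\varphi((N-1)/N)^2$ is $O(\Lambda^{(N)}_{N-1}/N^2)\to 0$ because $\varphi(1)=0$ (using that $\Lambda^{(N)}_{N-1}$ stays bounded in the continuum regime). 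Infimizing over ${\cal D}_1$ gives the bound.

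For the \emph{lower bound} $\liminf_N\lambda^{(N)}\ge\lambda$, let ${\itbf V}^{(N)}$ be the positive normalized Perron eigenvector and $\widehat\varphi_N\in H^1(0,1)$ its piecewise-affine interpolant. Interpolation estimates give $\int_0^1\widehat\varphi_N^2\to 1$, $\int_0^1\gamma_\infty(\widehat\varphi_N')^2=\lambda^{(N)}+o(1)$ (error controlled by the modulus of continuity of $\gamma_\infty$), and $\widehat\varphi_N(1)^2=O(\lambda^{(N)}/\Lambda^{(N)}_{N-1})$; since $\lambda^{(N)}$ is bounded by the previous step, $\{\widehat\varphi_N\}$ is bounded in $H^1$ on compact subsets of $(0,1]$, so along a subsequence $\widehat\varphi_N\rightharpoonup\varphi_\star$ in $H^1_{\rm loc}$ and $\widehat\varphi_N\to\varphi_\star$ in $L^2(0,1)$. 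The limit satisfies $\int_0^1\varphi_\star^2=1$ and $\varphi_\star(1)=0$ (the latter because $\liminf_N\Lambda^{(N)}_{N-1}>0$), hence is admissible in the continuum problem, and by weak lower semicontinuity of the convex functional $\varphi\mapsto\int_0^1\gamma_\infty\varphi'^2$ one gets $\lambda\le\int_0^1\gamma_\infty(\varphi_\star')^2\le\liminf_N\lambda^{(N)}$. Combined with the upper bound, $\lambda^{(N)}\to\lambda$.

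The \emph{main obstacle} is the $H^1$ compactness of the interpolants up to $u=0$: for type II, $\gamma_\infty(u)$ vanishes at $0$ (like $u^4$), so a bound on $\int_0^1\gamma_\infty(\widehat\varphi_N')^2$ does not by itself control $\widehat\varphi_N$ in $H^1$ near $0$; one must show that no mass of the minimizer can concentrate at the free (Neumann) endpoint so as to lower the Rayleigh quotient below $\lambda$---e.g.\ through a weighted Poincar\'e inequality adapted to $\gamma_\infty$, or by truncating near $0$ and bounding the energy cost using that the limiting extremal profile is bounded and $H^1$ away from the degeneracy. A secondary point is the uniformity needed to pass the Riemann sums to the limit, which rests on the asymptotics of $\beta_j$ and $\phi_j(\pm d/2)$ recalled in Appendix~\ref{app:dec}. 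A quicker but less self-contained route is to invoke the already-established PDE convergence $Q^{(N)}_\varphi(z,u)\to Q_\varphi(z,u)$ and to read off the two exponential decay rates from the semigroups $e^{z{\bf A}^{(N)}}$ and $e^{z{\cal H}_1}$ (each with a simple top eigenvalue by Perron-Frobenius / Krein-Rutman); there the obstacle is instead the interchange of the limits $N\to\infty$ and $z\to\infty$, which requires a uniform-in-$N$ spectral-gap estimate.
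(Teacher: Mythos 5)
Your proposal is sound, but it follows a genuinely different route from the paper. The paper does not argue variationally at the discrete level at all: it first quotes (from Gomez, following Stroock--Varadhan) the convergence of the solutions of the coupled power equations (\ref{eq:QNNa}--\ref{eq:QNNb}) to the solution of $\partial_z Q_\varphi={\cal H}_1 Q_\varphi$ with the mixed Neumann/Dirichlet conditions, and then states Proposition \ref{prop:lambdacont} ``as a consequence'' of that semigroup convergence, the formula for $\lambda$ being simply the Rayleigh-quotient characterization of the principal eigenvalue of ${\cal H}_1$, whose simplicity and positive eigenfunction come from a Krein--Rutman argument. You instead work directly with the Dirichlet forms: symmetry and irreducibility of ${\bf A}^{(N)}$, Courant--Fischer, discrete test functions built from $\varphi\in{\cal D}_1$ for the $\limsup$ bound, and compactness plus weak lower semicontinuity of the interpolated Perron eigenvectors for the $\liminf$ bound. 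What your route buys is self-containedness and an explicit two-sided estimate that does not require interchanging $N\to\infty$ with $z\to\infty$ (the weak point of the semigroup route, which you correctly identify: extracting the principal decay rate from convergence at fixed $z$ needs a uniform-in-$N$ gap or positivity argument, here delegated to the cited reference); what the paper's route buys is brevity and reuse of the already-established probabilistic limit, including the identification of the Dirichlet condition at $u=1$ generated by the single absorbing site, which in your scheme has to be re-derived through the penalty estimates $\widehat\varphi_N(1)^2=O(\lambda^{(N)}/\Lambda^{(N)}_{N-1})$. Your handling of the normalization (coupling rates effectively at scale $N^2$) is the right reading of a scaling the paper leaves implicit.

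Two caveats. First, the obstacle you flag at $u=0$ is real but lies outside the paper's standing hypothesis: the continuum-limit statement is quoted under the assumption that $\gamma_\infty$ is smooth and \emph{positive}, and under a uniform lower bound $\gamma_\infty\geq c>0$ your compactness step closes with no weighted Poincar\'e input; for type II, where $\gamma_\infty(u)\sim c\,u^4$ near $0$, the difficulty you describe is genuine (indeed test functions concentrating near $u=0$ make the weighted Rayleigh quotient small), but it affects the paper's own statement just as much as your proof. Second, for the $\liminf$ bound you need slightly more than the pointwise convergence $\gamma^{(N)}_{\lfloor xN\rfloor}\to\gamma_\infty(x)$ (local uniformity, available here from the explicit formulas for $\beta_j$ and $\phi_j(\pm d/2)$ in Appendix \ref{app:dec}), and you should also note $\liminf_N\Lambda^{(N)}_{N-1}>0$, which holds since $\Lambda_{N-1}$ is the only nonvanishing leakage coefficient in this regime. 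With these points made explicit, your argument is a valid and more self-contained alternative to the paper's.
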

Moreover, $-\lambda$ is a simple eigenvalue of the operator ${\cal H}_1$, the corresponding eigenvector $\varphi$ is smooth and unique 
(up to a multiplication by $-1$) and it can be chosen so as to satisfy
$\varphi(u)>0$ for $u \in [0,1)$ (the proof is similar as the one proposed in \cite{gomez} for the Pekeris waveguide
and it is based on a modified version of Krein-Rutman theorem).
The eigenvector $\varphi$ gives the asymptotic mode distribution for large propagation distance:
\begin{equation}
Q_j^{(N)}(z) \stackrel{z \to +\infty}{\simeq} c_V \varphi(j /N) \exp(-\lambda z),
\end{equation}
with $c_V=\sum_{l=0}^{N-1} |a_{l,{\rm s}}|^2 \varphi(l/N)/N$. We, therefore, observe an exponential decay of the mean power transported by the guided modes and a form of equipartition of the mean mode powers,
but not with the uniform distribution, but with the distribution proportional to the eigenvector $\varphi$.

If $\gamma_\infty$ is constant, then $\lambda= \pi^2 \gamma_\infty/4$ and the eigenvector is 
$\varphi(u)=\sqrt{2} \cos(\pi u/2)$. 
This happens in particular for type I perturbations when $0<n-1\ll 1$, so that, for all $u \in (0,1)$,
\begin{align*}
\gamma_\infty(u) \simeq &
\frac{1}{4 d^2  }  
 \hat{\cal R}_{I,{\rm l}}(0)    \iint_{[-d/2,d/2]^2} \! {\cal R}_{I,{\rm c}}(x,x')   \sin( \pi x /d)  \sin(\pi x'/d) dx dx'.
\end{align*}

\section{Fluctuation analysis}
\label{sec:fluctuationanalysis}
By (\ref{eq:genLP}) we find that the second-order moments of the mode powers 
\begin{equation}
R_{jl}  (z)= \EE \big[P_j(z)P_l(z)\big] ,\quad j,l=0,\ldots,N-1,
\end{equation}
satisfy the closed equations
\begin{align}
 \partial_z R_{jj} =& - 2 \Lambda_j R_{jj}  +\sum_{n \neq j}  \Gamma_{jn} (4R_{jn} -2R_{jj} )  ,\\
\nonumber
 \partial_z R_{jl}  =& - (2\Gamma_{jl}+ \Lambda_j +\Lambda_l) R_{jl}  +
\sum_{n \neq l}  \Gamma_{ln} (R_{jn} -R_{jl} ) \\
& + \sum_{n \neq j}  \Gamma_{jn} (R_{nl} -R_{jl} )  , \quad j\neq l.
\end{align}
This system has the same form as the one found in the literature dedicated to coupled mode theory \cite{dozier,creamer}.
The initial conditions are $ R_{jl} (0)=|{a}_{j,{\rm s}}|^2|{a}_{l,{\rm s}}|^2 $.
Let us introduce ${\itbf S} = (S_{jl})_{0\leq j\leq l\leq N-1}$ defined by
\begin{equation}
S_{jl} = 
\left\{
\begin{array}{ll}
R_{jl}+R_{lj} =2R_{jl} &\mbox{ if } j  <  l,\\
R_{jj} & \mbox{ if } j=l  .
\end{array}
\right.
\end{equation}
The $S_{jl}$'s satisfy the system
\begin{align}
\label{eq:Sjl1}
\partial_z S_{jl} =&
- ( \boldsymbol{\Psi} {\itbf S})_{jl} +(\boldsymbol{\Theta}  {\itbf S})_{jl}, \\
( \boldsymbol{\Psi} {\itbf S})_{jl}=&   (\Lambda_j+\Lambda_l)S_{jl} ,\\
\nonumber
(\boldsymbol{\Theta}  {\itbf S})_{jl}=& 2\Gamma_{jl} {\bf 1}_{j \neq l} (S_{jj}+S_{ll} -2S_{jl}) 
\\
&+\sum_{n \not\in \{j,l\}} \big[ \Gamma_{ln} (S_{jn}-S_{jl})+\Gamma_{jn}(S_{nl}-S_{jl})\big] ,
\end{align}
with the convention that whenever $S_{jl}$ occurs with $j>l$, it is replaced by $S_{lj}$.
The operator $\boldsymbol{\Theta} $ is the infinitesimal generator of a random Markov process $(J_z,L_z)_{z\geq 0}$
that is a random walk on the discrete triangle $\{ (j,l)\in \NN^2, \, 0 \leq j \leq l \leq N-1\}$.
Using a Feynmac-Kac formula we get the following probabilistic representation of $S_{jl}$:
$$
S_{jl}(z) = \EE\Big[ |a_{J_z,{\rm s}}|^2  |a_{L_z,{\rm s}}|^2 \exp\Big( - \int_0^z \Lambda_{J_{z'}} + \Lambda_{L_{z'}} dz' \Big) \Big|J_0=j, L_0=l \Big]  .
$$
We can anticipate that, in the continuum limit, the Markov process $(J_z/N,L_z/N)_{z\geq 0}$ behaves as a diffusion process on 
the triangle $\{(u,v)\in \RR^2, \, 0\leq u \leq v\leq 1\}$, and, therefore, $S_{jl}$ satisfies a diffusion equation on this triangle.

{\bf Long-range behavior of the second-order moments of the mode powers.}
Eq.~(\ref{eq:Sjl1}) has the form $\partial_z {\itbf S}  = (\boldsymbol{\Theta} 
- \boldsymbol{\Psi} ) {\itbf S}$.
The linear operator $ \boldsymbol{\Psi} $ is diagonal and the linear operator $\boldsymbol{\Theta} $
 is self-adjoint:  for any ${\itbf T}$ and $\widetilde{\itbf T}$, we have
\begin{align*}
\sum_{j\leq l}  (\boldsymbol{\Theta}  {\itbf T})_{jl}  \widetilde{T}_{jl}
&= -\sum_{j\leq l} \Theta_{jl,jl} T_{jl} \widetilde{T}_{jl}
+
\sum_{j <l , n \not\in \{j,l\}} \big( \Gamma_{ln} T_{jn} \widetilde{T}_{jl} +\Gamma_{jn}T_{nl}\widetilde{T}_{jl}\big) \\
&\quad +\sum_{j \neq n}\big(\Gamma_{jn} T_{jn} \widetilde{T}_{jj} +\Gamma_{jn} T_{nj} \widetilde{T}_{jj}\big) 
+2 \sum_{j < l}\big( \Gamma_{jl} T_{jj} \widetilde{T}_{jl} + \Gamma_{jl} T_{jj} \widetilde{T}_{jl}\big)\\
&=\sum_{j\leq l}
T_{jl} (\boldsymbol{\Theta}  \widetilde{\itbf T})_{jl} 
,
\end{align*}
because $2\sum_{j<l} = \sum_{j \neq l}$.
As a consequence, $\boldsymbol{\Theta} 
- \boldsymbol{\Psi} $ can be diagonalized and as a consequence of Perron-Frobenius theorem
we get the following result.
\begin{proposition}
The second-order moments of the mode powers satisfy
\begin{equation}
{\itbf S}(z) \stackrel{z \to +\infty}{\simeq}
c_W {\itbf W} \exp\big( -\mu z \big) \big(1+o(1)\big)  ,
\end{equation}
where 
$(-\mu,{\itbf W})$ is the first eigenvalue/eigenvector of $\boldsymbol{\Theta}-\boldsymbol{\Psi}$
and $c_W$ is the projection of the initial conditions on the first eigenvector $ {\itbf W}$ 
\begin{equation}
c_W 
=
 \sum_{j,l=0}^{N-1} W_{jl}  |{a}_{j,{\rm s}}|^2|{a}_{l,{\rm s}}|^2  ,
\end{equation}
with the convention that whenever $W_{jl}$ occurs with $j>l$, it is replaced by $W_{lj}$.
\end{proposition}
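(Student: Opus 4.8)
The plan is to mimic exactly the argument used for Proposition on the mean mode powers (the one giving $\mathbf{Q}(z) \simeq c_V V_j \exp(-\lambda z)$), transporting it from the matrix $\mathbf{A} = \boldsymbol{\Gamma} - \boldsymbol{\Phi}$ acting on $\mathbb{R}^N$ to the operator $\boldsymbol{\Theta} - \boldsymbol{\Psi}$ acting on the space of symmetric arrays $\mathbf{S} = (S_{jl})_{0\le j\le l\le N-1}$. The preceding display has already established the two structural facts we need: $\boldsymbol{\Theta}$ is self-adjoint for the natural inner product $\langle \mathbf{T},\widetilde{\mathbf{T}}\rangle = \sum_{j\le l} T_{jl}\widetilde{T}_{jl}$, and $\boldsymbol{\Psi}$ is diagonal (hence self-adjoint). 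Therefore $\mathbf{B} := \boldsymbol{\Theta} - \boldsymbol{\Psi}$ is a self-adjoint (symmetric) linear operator on a finite-dimensional Euclidean space, so it is diagonalizable with real eigenvalues and an orthonormal eigenbasis.

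First I would verify that $\mathbf{B}$ (up to adding a large enough multiple of the identity) is the generator of a sub-Markovian semigroup: the off-diagonal entries of $\boldsymbol{\Theta}$ are nonnegative (they are the coupling rates $\Gamma_{jn},\Gamma_{ln}\ge 0$ plus the $2\Gamma_{jl}$ terms), the diagonal corrections $-\Theta_{jl,jl}$ come from a jump generator so row sums of $\boldsymbol{\Theta}$ vanish, and $\boldsymbol{\Psi}$ only subtracts the nonnegative quantities $\Lambda_j+\Lambda_l$ on the diagonal. Consequently $e^{\mathbf{B}z}$ has nonnegative entries for all $z\ge 0$, i.e. $\mathbf{B}$ is the generator of the killed random walk $(J_z,L_z)$ on the discrete triangle appearing in the Feynman-Kac representation. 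Next, using irreducibility of $\boldsymbol{\Gamma}$ (assumed from Section~\ref{sec:effmarkov3}), I would check that $\boldsymbol{\Theta}$ is irreducible on the triangle — the nearest-neighbour/general coupling makes the random walk able to reach any $(j',l')$ from any $(j,l)$ — so that $e^{\mathbf{B}z}$ has strictly positive entries for $z>0$. Then the Perron--Frobenius theorem (in its form for irreducible nonnegative matrices, equivalently the Krein--Rutman theorem for the positivity-preserving semigroup) applies: $\mathbf{B}$ has a simple, real, maximal eigenvalue, which we name $-\mu$, with an eigenvector $\mathbf{W}$ whose entries all have the same sign; normalize so that $W_{jl}>0$.

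With the spectral decomposition $\mathbf{B} = \sum_m (-\mu_m) \mathbf{W}^{(m)}\langle\mathbf{W}^{(m)},\cdot\rangle$, orthonormal eigenbasis, $-\mu = -\mu_1 > -\mu_2 \ge \cdots$, the solution of $\partial_z \mathbf{S} = \mathbf{B}\mathbf{S}$ with $\mathbf{S}(0)_{jl} = |a_{j,{\rm s}}|^2|a_{l,{\rm s}}|^2$ is $\mathbf{S}(z) = \sum_m e^{-\mu_m z}\langle \mathbf{W}^{(m)},\mathbf{S}(0)\rangle \mathbf{W}^{(m)}$. Factoring out $e^{-\mu z}$ gives $\mathbf{S}(z) = e^{-\mu z}\big( c_W \mathbf{W} + \sum_{m\ge 2} e^{-(\mu_m-\mu)z}\langle\mathbf{W}^{(m)},\mathbf{S}(0)\rangle\mathbf{W}^{(m)}\big)$ with $c_W = \langle \mathbf{W},\mathbf{S}(0)\rangle = \sum_{j\le l} W_{jl}|a_{j,{\rm s}}|^2|a_{l,{\rm s}}|^2$, which after unfolding the symmetric sum (and using the stated convention $W_{jl}=W_{lj}$ for $j>l$) is exactly $\sum_{j,l=0}^{N-1} W_{jl}|a_{j,{\rm s}}|^2|a_{l,{\rm s}}|^2$ up to the diagonal-counting bookkeeping. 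Since $\mu_m - \mu > 0$ for $m\ge 2$ by simplicity of the first eigenvalue, the remainder is $o(1)$ as $z\to+\infty$, yielding $\mathbf{S}(z) \simeq c_W\mathbf{W}\exp(-\mu z)(1+o(1))$.

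The main obstacle is the positivity/irreducibility bookkeeping rather than the linear algebra: one must make sure that the operator $\boldsymbol{\Theta}$ really generates an irreducible random walk on the triangle so that Perron--Frobenius gives a \emph{simple} top eigenvalue with a \emph{strictly} positive eigenvector. The subtlety is the reflection convention ``$S_{jl}$ with $j>l$ is replaced by $S_{lj}$'', which means the relevant state space is the quotient triangle and one must confirm the walk does not decouple into non-communicating pieces (e.g.\ the diagonal states $(j,j)$ must communicate with the strictly-lower states, which they do through the $2\Gamma_{jl}(S_{jj}+S_{ll}-2S_{jl})$ term). Once irreducibility on the quotient triangle is in hand, the rest is a verbatim repeat of the proof of the mean-mode-power proposition. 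I would also note, for completeness, that $\mu \le 0$ is \emph{not} claimed here (unlike $-\lambda \le 0$ for $\mathbf{A}$); indeed the whole point of the paper is that $-\mu$ can exceed $-2\lambda$, so I would not attempt to sign $\mu$ at this stage.
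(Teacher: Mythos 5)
Your proposal is correct and follows essentially the same route as the paper: the paper's proof consists precisely of the self-adjointness of $\boldsymbol{\Theta}$ and diagonality of $\boldsymbol{\Psi}$ (so $\boldsymbol{\Theta}-\boldsymbol{\Psi}$ is diagonalizable in the triangle inner product) followed by an appeal to Perron--Frobenius, and you merely flesh out the positivity/irreducibility and spectral-expansion details. One small bookkeeping correction: since $S_{jl}=2R_{jl}$ for $j<l$, the initial condition is $S_{jl}(0)=(1+{\bf 1}_{j\neq l})\,|a_{j,{\rm s}}|^2|a_{l,{\rm s}}|^2$ rather than $|a_{j,{\rm s}}|^2|a_{l,{\rm s}}|^2$, and with this the projection $\langle {\itbf W},{\itbf S}(0)\rangle=\sum_{j\le l}W_{jl}S_{jl}(0)$ equals the stated double sum $\sum_{j,l}W_{jl}|a_{j,{\rm s}}|^2|a_{l,{\rm s}}|^2$ exactly, with no residual ``diagonal-counting'' discrepancy.
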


We next address special cases.

{\bf No effective dissipation.}
If there is no effective dissipation, then the first eigenvalue/eigenvector  $(-\mu^{(0)},{\itbf W}^{(0)})$ of the matrix
$\boldsymbol{\Theta}$ is 
$
{\itbf W}^{(0)} = \big( c_N \big)_{0\leq j \leq l \leq N-1}$, 
$\mu^{(0)}=0$,
with $c_N=\sqrt{2}/\sqrt{N(N+1)}$.
We have
$
{\itbf S}(z) \stackrel{z \to +\infty}{\to}
c_W {\itbf W^{(0)}}  .
$
As $\sum_{j\leq l} S_{jl}(z) =\sum_{j,l} R_{jl}(z) = (\sum_{j=0}^{N-1} |{a}_{j,{\rm s}}|^2)^2$, we deduce 
$$
{S}_{jl}(z) \stackrel{z \to +\infty}{\to} \Big(\sum_{l'=0}^{N-1} |{a}_{l',{\rm s}}|^2\Big)^2\frac{2}{N(N+1)} ,
$$
and
$$
{R}_{jl}(z) \stackrel{z \to +\infty}{\to} \Big(\sum_{l'=0}^{N-1} |{a}_{l',{\rm s}}|^2\Big)^2\frac{1+\delta_{jl}}{N(N+1)}.
$$
{By taking into account (\ref{eq:equip1}), 
this means that, when $N \gg 1$, the mode powers $P_j$ become uncorrelated.
Furthermore, this regime was analyzed in detail in \cite[Chapter 20]{FGPSbook},
where it is shown that the marginal distributions of the mode powers $P_j$ 
have the same moments as exponential distributions.} In other words, the mode powers behave as the square moduli 
of independent and identically distributed complex Gaussian variables.

{\bf Weak effective dissipation.}
We next consider the case when the effective dissipation is weak, 
say $\Lambda_j = \theta \Lambda_j^{(1)}$ with $\theta \ll 1$.
Then we can write $\boldsymbol{\Psi} =\theta \boldsymbol{\Psi}^{(1)}$ and $\boldsymbol{\Theta} = \boldsymbol{\Theta}^{(0)}$
and the first eigenvalue/eigenvector $(-\mu,{\itbf W})$  of the matrix
$\boldsymbol{\Theta}-\boldsymbol{\Psi}$ can be expanded
as
$$
 {\itbf W} = {\itbf W}^{(0)} +\theta {\itbf W}^{(1)} +O(\theta^2)
  ,\quad  \mu= \theta \mu^{(1)} +\theta^2 \mu^{(2)} +O(\theta^3),
$$
with
\begin{align}
\nonumber
\mu^{(1)} =& {{\itbf W}^{(0)}}^T \boldsymbol{\Psi}^{(1)} {\itbf W}^{(0)}
 =\frac{2}{N} \sum_{j=0}^{N-1}  \Lambda_j^{(1)} = 2\lambda^{(1)},\\
\mu^{(2)} =& {{\itbf W}^{(1)}}^T \boldsymbol{\Theta}^{(0)} {\itbf W}^{(1)} ,
\end{align}
and $ {\itbf W}^{(1)}$ is solution of $ \boldsymbol{\Theta}^{(0)}  {\itbf W}^{(1)} =( \boldsymbol{\Psi}^{(1)} -\mu^{(1)})
{\itbf W}^{(0)}$ and is orthogonal to $ {\itbf W}^{(0)}$.
If, for instance, $\Gamma_{jl}\equiv \Gamma >0$ for all $j\neq l$, then
$$
W_{jl}^{(1)}= - \frac{c_N}{\Gamma N} \big( \Lambda_j^{(1)}+ \Lambda_l^{(1)} -2\lambda^{(1)}\big) , \quad j\leq l,
$$
and
\begin{align*}
\mu^{(2)}  &=
{{\itbf W}^{(1)}}^T \boldsymbol{\Psi}^{(1)} {\itbf W}^{(0)} \\
%\sum_{j\leq l} W_{jl}^{(1)} (  \boldsymbol{\Psi}^{(1)}  {\itbf W}^{(0)} )_{jl}\\
%\sum_{j\leq l} W_{jl}^{(1)} (  \boldsymbol{\Theta}^{(0)}  {\itbf W}^{(1)} )_{jl}\\
&= - \frac{2(N+2)}{N^2(N+1) \Gamma}  \sum_{j=0}^{N-1} (\Lambda_j^{(1)}-\lambda^{(1)})^2  .
\end{align*}
Note that
\begin{align}
\nonumber
\mu - 2\lambda  =&  
\theta^2\big(\mu^{(2)}  -2 \lambda^{(2)}\big) +O(\theta^3)
\\
=&
-\frac{2 \theta^2}{N^2(N+1) \Gamma }   \sum_{j=0}^{N-1} (\Lambda_j^{(1)}-\lambda^{(1)})^2 
+O(\theta^3) 
\label{eq:diffvp}
\end{align}
is negative-valued as soon as there exist $j,j'$ such that $\Lambda_j^{(1)}\neq \Lambda_{j'}^{(1)}$.

{\bf Weak coupling.}
We next consider the case when the coupling is weak, 
say
$
\Gamma_{jl} = \theta \Gamma_{jl}^{(1)} 
$,
with $\theta \ll 1$.
We again assume that the dissipation coefficients have a unique minimum 
(\ref{eq:uniquemin}).
  Then we can write $\boldsymbol{\Psi} = \boldsymbol{\Psi}^{(0)}$ and $\boldsymbol{\Theta} = \theta \boldsymbol{\Theta}^{(1)}$
and the first eigenvalue/eigenvector  $(-\mu,{\itbf W})$  of the matrix
$\boldsymbol{\Theta}-\boldsymbol{\Psi}$ can then be expanded
as
$$
 {\itbf W} = {\itbf W}^{(0)} +\theta {\itbf W}^{(1)} +O(\theta^2)
  ,\quad  \mu=   \mu^{(0)} +\theta \mu^{(1)} +\theta^2 \mu^{(2)} +O(\theta^3),
$$
with
\begin{eqnarray}
\mu^{(0)} &=& 2 \Lambda_{j_\star} ,\\
W^{(0)}_{jl} &=& \delta_{jj_\star} \delta_{lj_\star},\\
\mu^{(1)} &=& - {{\itbf W}^{(0)}}^T \boldsymbol{\Theta}^{(1)} {\itbf W}^{(0)} =
 2 \sum_{j \neq j_\star} \Gamma_{jj_\star}^{(1)} ,
 \end{eqnarray}
 $ {\itbf W}^{(1)}$ is solution of $ (\mu^{(0)}-\boldsymbol{\Phi}^{(0)} ) {\itbf W}^{(1)} =(- \boldsymbol{\Theta}^{(1)} -\mu^{(1)})
{\itbf W}^{(0)}$ and is orthogonal to $ {\itbf W}^{(0)}$,
\begin{eqnarray}
W^{(1)}_{jl} &=& 
\left\{
\begin{array}{ll}
\frac{ 2\Gamma_{j_\star l} }{\Lambda_l - \Lambda_{j_\star}}   & \mbox{ if } j=j_\star, \, l >j_\star,\\
\frac{ 2\Gamma_{jj_\star} }{\Lambda_j - \Lambda_{j_\star}}  & \mbox{ if } j<j_\star, \, l =j_\star,\\
0 &\mbox{ otherwise},
\end{array}
\right.
\\
\nonumber
\mu^{(2)} &=& 
- {{\itbf W}^{(0)}}^T \boldsymbol{\Theta}^{(1)} {\itbf W}^{(1)} 
= -4 \sum_{j\neq j_\star} \frac{(\Gamma_{jj_\star}^{(1)})^2}{\Lambda_j-\Lambda_{j_\star}} .
\end{eqnarray}
Note that
\begin{eqnarray}
\mu - 2\lambda =  -2 \theta^2 \sum_{j\neq j_\star} \frac{(\Gamma_{jj_\star}^{(1)})^2}{\Lambda_j-\Lambda_{j_\star}}  +O(\theta^3) 
\label{eq:diffvpbis}
\end{eqnarray}
is negative-valued (we have assumed the irreducibility of $\boldsymbol{\Gamma}^{(1)}$, hence at least one of the $\Gamma_{jj_\star}^{(1)}$
is non-zero).

% $ {\itbf W}^{(1)}$ is solution of $ (\mu^{(0)}-\boldsymbol{\Phi}^{(0)} ) {\itbf W}^{(1)} =(- \boldsymbol{\Theta}^{(1)} -\mu^{(1)})
%{\itbf W}^{(0)}$ and is orthogonal to $ {\itbf W}^{(0)}$,
%\begin{eqnarray}
%W^{(1)}_{jl} &=& \frac{ \Gamma_{j_\star l} }{\Lambda_l - \Lambda_{j_\star}} \delta_{jj_\star} (1-\delta_{l j_\star})  ,\\
%\nonumber
%\mu^{(2)} &=& 
%- {{\itbf W}^{(0)}}^T \boldsymbol{\Theta}^{(1)} {\itbf W}^{(1)} 
%= -2 \sum_{j\neq j_\star} \frac{(\Gamma_{jj_\star}^{(1)})^2}{\Lambda_j-\Lambda_{j_\star}} .
%\end{eqnarray}
%Note that
%\begin{eqnarray}
%\mu - 2\lambda =  O(\theta^3) .
%\label{eq:diffvpbis}
%\end{eqnarray}
%

{\bf Continuum approximation.}
Here we address the situations where the coupling between guided modes is via nearest neighbors
and the number of modes is large. When $\Gamma_{jl}$ is of the form (\ref{eq:approxGammajl}),
the system (\ref{eq:Sjl1}) for $S_{jl}$ reads
\begin{align}
\nonumber
\partial_z S_{jl} =& \delta_{jl} \big[ 2\gamma_j (S_{jj+1}-S_{jj}) {\bf 1}_{j\leq N-2} 
+2\gamma_{j-1} (S_{j-1j}-S_{jj} ){\bf 1}_{j \geq 1}\big] \\
\nonumber
& + \delta_{l j+1} \big[ 2 \gamma_j (S_{jj}+S_{j+1j+1}-2S_{jj+1}) 
+\gamma_{j-1} (S_{j-1j+1}-S_{jj+1}) {\bf 1}_{j \geq 1} \\
\nonumber
&\quad +\gamma_{j+1} (S_{jj+2}-S_{jj+1}){\bf 1}_{j \leq N-3} \big]\\
\nonumber
&+ {\bf 1}_{j\leq l-2} \big[ \gamma_{l-1} (S_{jl-1}-S_{jl} ){\bf 1}_{l \geq 1}
+\gamma_l (S_{jl+1}-S_{jl}) {\bf 1}_{l\leq N-2}\\
\nonumber&\quad 
+\gamma_{j-1} (S_{j-1l}-S_{jl}){\bf 1}_{j \geq 1}
+\gamma_j (S_{j+1l}-S_{jl}) %{\bf 1}_{j \leq N-2}
\big] \\
&
- \Lambda_{N-1}(\delta_{jN-1}+\delta_{lN-1}) S_{jl} .
\label{eq:SNN}
\end{align}
Note that the terms involving $\gamma_j$ define the infinitesimal generator of a random walk 
$(J_z,L_z)_{z\geq 0}$ on the finite set $D_N=\{0 \leq j \leq l \leq N-1\}$ (see Figure \ref{fig:markov2}).

\begin{figure}
\begin{center}
\begin{tikzpicture}[scale=7]
\draw [->] (0,0) --  (0,0.9 );
\draw (0,0.9) node[right] {$l$};
\draw [->] (0,0) --  (0.9,0) ;
\draw (0.9,0) node[right] {$j$};
\draw (0,0) node[left] {$(0,0)$};
\draw (0,0.8) node[left] {$(0,N-1)$};
\draw (0.8,-0.01) -- (0.8,0.01);
\draw (0.8,0) node[below] {$(N-1,0)$};
\draw [very thick] (0,0) --  (0,0.8) -- (0.8,0.8) -- (0,0) ;

\draw (0.2,0.3) node{$\bullet$};
\draw (0.24,0.23) node[right] {$(j,j+1)$};
\draw[->,>=latex] (0.2, 0.3)  to[bend right] (0.2 , 0.2);
\draw (0.2,0.21) node[left] {$2\gamma_j$};
\draw[->,>=latex] (0.2, 0.3)  to[bend  left] (0.3 , 0.3);
\draw (0.28,0.3) node[above] {$2\gamma_j$};
\draw[->,>=latex] (0.2, 0.3)  to[bend  right] (0.2 , 0.4);
\draw (0.185,0.4) node[right] {$\gamma_{j+1}$};
\draw[->,>=latex] (0.2, 0.3)  to[bend  left] (0.1 , 0.3);
\draw (0.04,0.32) node[right] {$\gamma_{j-1}$};

\draw (0.6,0.6) node{$\bullet$};
\draw (0.59,0.58) node[right] {$(j,j)$};
\draw[->,>=latex] (0.6, 0.6)  to[bend  right] (0.6 , 0.7);
\draw (0.58,0.62) node[left] {$2\gamma_{j-1}$};
\draw[->,>=latex] (0.6, 0.6)  to[bend  left] (0.5 , 0.6);
\draw (0.62,0.71) node[left] {$2\gamma_j$};

\draw (0,0.5) node{$\bullet$};
\draw (0,0.5) node[left] {$(0,l)$};
\draw[->,>=latex] (0, 0.5)  to[bend  right] (0 , 0.6);
\draw (0,0.59) node[right] {$\gamma_{l}$};
\draw[->,>=latex] (0, 0.5)  to[bend  left] (0 , 0.4);
\draw (0,0.41) node[right] {$\gamma_{l-1}$};
\draw[->,>=latex] (0, 0.5)  to[bend  left] (0.1 , 0.5);
\draw (0.08,0.51) node[right] {$\gamma_0$};

\draw (0.3,0.6) node{$\bullet$};
\draw (0.3,0.55) node[left] {$(j,l)$};
\draw[->,>=latex] (0.3,0.6)  to[bend  right] (0.3 , 0.7);
\draw (0.295,0.69) node[right] {$\gamma_{l}$};
\draw[->,>=latex] (0.3,0.6)  to[bend  right] (0.3 , 0.5);
\draw (0.29,0.49) node[right] {$\gamma_{l-1}$};
\draw[->,>=latex] (0.3,0.6)  to[bend  left] (0.4 , 0.6);
\draw (0.38,0.58) node[right] {$\gamma_{j}$};
\draw[->,>=latex] (0.3,0.6)  to[bend  left] (0.2 , 0.6);
\draw (0.14,0.62) node[right] {$\gamma_{j-1}$};

\draw (0.8,0.8) node{$\bullet$};
\draw (0.8,0.8) node[right] {$(N-1,N-1)$};
\draw[->,>=latex] (0.8,0.8)  to[bend  right] (0.7 , 0.8);
\draw (0.71,0.8) node[above] {$\gamma_{N-2}$};

\draw (0.4,0.8) node{$\bullet$};
\draw (0.275,0.8) node[below] {$(j,N-1)$};
\draw[->,>=latex] (0.4,0.8)  to[bend  right] (0.3 , 0.8);
\draw (0.31,0.8) node[above] {$\gamma_{j-1}$};
\draw[->,>=latex] (0.4,0.8)  to[bend  left] (0.5 , 0.8);
\draw (0.48,0.8) node[above] {$\gamma_{j}$};
\draw[->,>=latex] (0.4,0.8)  to[bend  right] (0.4 , 0.7);
\draw (0.385,0.7) node[right] {$\gamma_{N-2}$};

\draw (0,0.8) node{$\bullet$};
\draw[->,>=latex] (0,0.8)  to[bend  left] (0.1 , 0.8);
\draw (0.1,0.8) node[above] {$\gamma_{0}$};
\draw[->,>=latex] (0,0.8)  to[bend  right] (0 , 0.7);
\draw (0,0.71) node[left] {$\gamma_{N-2}$};

\draw (0,0) node{$\bullet$};
\draw[->,>=latex] (0,0)  to[bend  left] (0 , 0.1);
\draw (0,0.1) node[left] {$\gamma_{0}$};

\end{tikzpicture}

\end{center}
\caption{The transition rates of the jump Markov process $(J_z,L_z)_{z\geq 0}$ on the state space $D_N=\{0 \leq j\leq l \leq N-1\}$. 
The absorption is concentrated on the line $(j,N-1)_{j=0}^{N-1}$.}
\label{fig:markov2}
\end{figure}
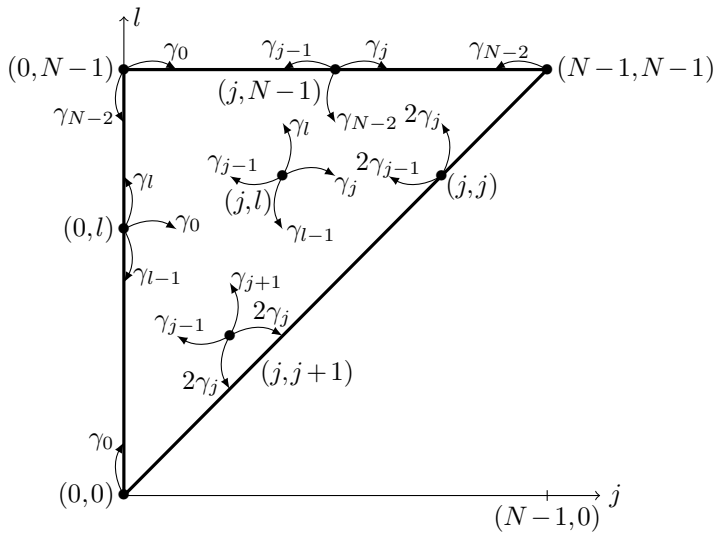

We proceed as in \cite{gomez} to determine the asymptotic behavior of $ S_{jl}$ when $N \to +\infty$.
We denote $ S_{jl}$ by $S_{jl}^{(N)}$ to keep track of the dependence with respect to $N$.
We introduce the triangle  $D=\{ (u,v )\in \RR^2, \, 0 < u < v< 1\}$.
We get that, for any function $\psi$ in $L^2(D)$, the function 
$$
S^{(N)}_\psi (z,u,v) = \sum_{0\leq j\leq l\leq N-1}  S^{(N)}_{jl} (z) \psi(\lfloor j/N\rfloor,\lfloor l/N\rfloor) ,
$$
where $S^{(N)}_{jl}$ is the solution of (\ref{eq:SNN}) starting from $S^{(N)}_{jl}(z=0)=\delta_{j , \lfloor u N \rfloor}\delta_{l , \lfloor v N \rfloor}$,
satisfies
$$
\lim_{N \to \infty} S^{(N)}_\psi (z,u,v)  =S_\psi(z,u,v)  ,
$$
where $S_\psi$ is the solution of the partial differential equation:
\begin{equation}
\partial_z S_\psi = {\cal H}_2 S_\psi, \quad {\cal H}_2  = \partial_u \big( \gamma_\infty(u)\partial_u \cdot)+ \partial_v \big( \gamma_\infty(v)\partial_v \cdot)  ,
\end{equation}
with the boundary condition (Dirichlet on the face $\{u=1\}$ of the triangle $D$,
Neumann on the faces $\{v=0\}$ and $\{u=v\}$, see Figure \ref{fig:markov3}):
\begin{equation}
\label{eq:bc2}
\partial_u S_\psi(z,0,v)=0, \quad
S_\psi(z,u,1)=0, \quad
(\partial_u -\partial_v) S_\psi(z,u,v) \mid_{u=v}=0,
\end{equation}
and the initial condition $S_\psi(0,u,v)=\psi(u,v)$.

\begin{figure}
\begin{center}
\begin{tikzpicture}[scale=5]
\draw [->] (0,0) --  (0,0.9 );
\draw (0,0.9) node[right] {$v$};
\draw [->] (0,0) --  (0.9,0) ;
\draw (0.9,0) node[right] {$u$};
\draw (0,0) node[left] {$(0,0)$};
\draw (0,0.8) node[left] {$(0,1)$};
\draw (0.8,-0.01) -- (0.8,0.01);
\draw (0.8,0) node[below] {$(1,0)$};
\draw [very thick] (0,0) --  (0,0.8) -- (0.8,0.8) -- (0,0) ;
\draw (0.8,0.8) node{$\bullet$};
\draw (0.8,0.8) node[right] {$(1,1)$};

\draw (0.4,0.4) node[right] {$Neumann$};
\draw (0,0.4) node[left] {$Neumann$};
\draw (0.4,0.8) node[above] {$Dirichlet$};

\end{tikzpicture}
\end{center}
\caption{The domain of the continuum approximation with its boundary conditions.}
\label{fig:markov3}
\end{figure}
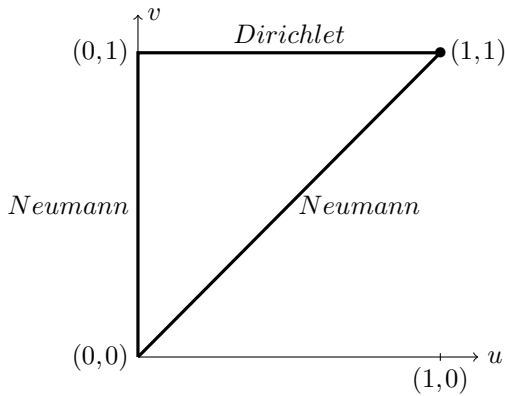

Consequently, we get the following result.

\begin{proposition}
\label{prop:mucont}
In the continuum approximation, 
the first eigenvalue $-\mu^{(N)}$ of $\boldsymbol{\Theta}^{(N)}-\boldsymbol{\Psi}^{(N)}$
converges as $N \to \infty$ to $-\mu$ with 
\begin{equation}
\label{def:mupropcont}
\mu = \inf_{\psi \in {\cal D}_2}
\iint_D \gamma_\infty(u) [\partial_u \psi(u,v)]^2 +\gamma_\infty(v) [\partial_v \psi(u,v)]^2 du dv  
\end{equation}
and
\begin{align}
\nonumber
{\cal D}_2= &\Big\{ \psi \in {\cal C}^\infty(\bar{D}) , \, \iint_D \psi(u,v)^2 du dv=1, \\
& \partial_u \psi (0,v)=0 ,
 \,  \psi(u,1)=0, 
 \, (\partial_u -\partial_v )\psi(u,v) \mid_{u=v}=0 \Big\} .
\end{align}
\end{proposition}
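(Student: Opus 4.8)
The plan is to deduce the proposition from the semigroup convergence stated just above together with standard spectral theory for the limiting self-adjoint operator ${\cal H}_2$, exactly as the analogous Proposition~\ref{prop:lambdacont} is obtained for ${\cal H}_1$. First I would record the consequence of the convergence $S^{(N)}_\psi(z,\cdot,\cdot)\to S_\psi(z,\cdot,\cdot)$ (established following \cite{gomez} and the Markov-chain approximation of diffusions of \cite[Chapter~11]{sv}): identifying $\RR^{D_N}$ with piecewise-constant functions on the triangle $D$, with indices rescaled by $1/N$ and the inner product normalised by $N^{-2}$, this says that the semigroups $e^{z(\boldsymbol{\Theta}^{(N)}-\boldsymbol{\Psi}^{(N)})}$ converge strongly, on the dense set of test functions, to the semigroup $e^{z{\cal H}_2}$ of a self-adjoint operator. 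Equivalently, the associated discrete Dirichlet forms $-\langle(\boldsymbol{\Theta}^{(N)}-\boldsymbol{\Psi}^{(N)})\,\cdot\,,\,\cdot\,\rangle$ --- which, by the nearest-neighbour structure of (\ref{eq:SNN}), are sums of squared discrete differences weighted by the $\gamma^{(N)}_j$ plus boundary killing terms proportional to $\Lambda^{(N)}_{N-1}$ --- Mosco-converge to the form $\psi\mapsto\iint_D \gamma_\infty(u)(\partial_u\psi)^2+\gamma_\infty(v)(\partial_v\psi)^2\,du\,dv$ on the closure of ${\cal D}_2$.

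Next I would analyse ${\cal H}_2$ on $L^2(D,du\,dv)$. With the mixed boundary conditions (\ref{eq:bc2}) it is self-adjoint and non-positive, with the quadratic form just written. Since $D$ is a bounded Lipschitz domain and $\gamma_\infty$ is smooth and strictly positive on $(0,1)$ --- for type~I bounded above and below away from $0$ on all of $[0,1]$, for type~II vanishing only like $u^4$ at $u=0$, which is still an integrable, form-closable weight for which ${\cal D}_2$ is a form core --- the form has compact resolvent, so $-{\cal H}_2$ has purely discrete spectrum. The Dirichlet condition on $\{v=1\}$ excludes the constants from the null space, so the form is strictly positive; hence the bottom of the spectrum is a strictly positive eigenvalue, equal by the min-max principle to the Rayleigh quotient (\ref{def:mupropcont}). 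Its simplicity and the positivity of the eigenvector $\psi$ follow, as for ${\cal H}_1$, from a Krein--Rutman argument using that $e^{z{\cal H}_2}$ is positivity improving.

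Then I would pass to the limit. On the discrete side, $\boldsymbol{\Theta}^{(N)}-\boldsymbol{\Psi}^{(N)}$ is self-adjoint (self-adjointness of $\boldsymbol{\Theta}^{(N)}$ was checked above; $\boldsymbol{\Psi}^{(N)}$ is diagonal) and, after a shift by $cI$, a nonnegative irreducible matrix on the connected triangle $D_N$, so Perron--Frobenius yields the simple top eigenvalue $-\mu^{(N)}$ with positive eigenvector --- this is the Proposition on the long-range behaviour of $\mathbf S$. Mosco convergence of the forms together with the compactness of the resolvent of the limit then forces convergence of the whole spectrum, in particular $-\mu^{(N)}\to-\mu$ (equivalently, the strong semigroup convergence plus the uniform spectral gap gives convergence of the principal eigenvalues). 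The same limit also yields the stated asymptotic mode distribution $S^{(N)}_{jl}(z)\simeq c_W\,\psi(j/N,l/N)\,e^{-\mu z}$.

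I expect the main obstacle to be the identification of the boundary conditions in the scaling limit rather than the (routine) spectral arguments above. The Neumann condition on $\{v=0\}$ is immediate from the walk, but the Neumann condition $(\partial_u-\partial_v)\psi\mid_{u=v}=0$ encodes the reflection of $(J_z,L_z)$ at the diagonal and requires carefully tracking the folding $S_{jl}=S_{lj}$ and the attendant doubling of rates ($2\Gamma_{jl}$ versus $\Gamma_{jl}$) in (\ref{eq:SNN}); most delicately, one must show that the $O(1)$ killing rate $\Lambda^{(N)}_{N-1}$ carried by the single line $\{l=N-1\}$ produces a genuine Dirichlet --- not Robin --- condition on $\{v=1\}$ in the limit, which is exactly the boundary-layer analysis of \cite{gomez} following \cite[Chapter~11]{sv}. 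Checking that the type~II degeneracy $\gamma_\infty(u)\sim c\,u^4$ at $u=0$ does not spoil closability of the form, compactness of the resolvent, or the density of ${\cal D}_2$ is a secondary but necessary point.
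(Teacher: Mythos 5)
Your proposal takes essentially the same route as the paper: the proposition is obtained there as a direct consequence of the convergence of the rescaled discrete system (\ref{eq:SNN}) to the diffusion equation $\partial_z S_\psi={\cal H}_2 S_\psi$ on the triangle $D$ with the mixed boundary conditions (\ref{eq:bc2}), established following \cite{gomez} and \cite[Chapter 11]{sv}, after which $\mu$ is identified as the principal Rayleigh quotient of ${\cal H}_2$ over ${\cal D}_2$. The additional functional-analytic scaffolding you supply (Mosco convergence of the forms, compact resolvent, Krein--Rutman positivity) is consistent with, and merely elaborates on, the details the paper delegates to the cited references, and you correctly single out the boundary-condition identification (diagonal folding and the Dirichlet condition on $\{v=1\}$ from the killing term) as the genuinely delicate step.
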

Equivalently,
\begin{equation}
\label{eq:caracmutilde2}
\mu = \inf_{\check{\psi} \in \check{\cal D}_2}
\iint_{(0,1)^2}  \gamma_\infty(u) [\partial_u \check{\psi}(u,v)]^2+\gamma_\infty(v) [\partial_v \check{\psi}(u,v)]^2 du dv  ,
\end{equation}
with
\begin{align}
\nonumber
\check{\cal D}_2=& \Big\{ \check{\psi} \in {\cal C}^\infty([0,1]^2) , \, \iint_{(0,1)^2} \check{\psi}(u,v)^2 du dv=1, \\
& \partial_u \check{\psi} (0,v)=0 , \, \partial_v \check{\psi} (u,0)=0 ,
 \,  \check{\psi}(u,1)=0, \, \check{\psi}(1,v)=0 \Big\} .
\end{align}
{\it Proof of (\ref{eq:caracmutilde2}).}
We denote by $\check{\mu}$ the right-hand side of (\ref{eq:caracmutilde2}).
We can show as in \cite{gomez} that $-\check{\mu}$ is a simple eigenvalue of the operator ${\cal H}_2$ on $(0,1)^2$ with  Dirichlet  boundary conditions on $\{v=1\}$ and $\{u=1\}$ and Neumann boundary conditions on $\{u=0\}$ and $\{v=0\}$ 
and that the corresponding eigenvector $\check{\psi}$ is smooth and unique up to a multiplication by $-1$. 
Moreover, the function  $\check{\psi}_2: (u,v) \in (0,1)^2 \mapsto   \check{\psi}(v,u)$ also satisfies ${\cal H}_2 \check{\psi}_2=- \check{\mu} \check{\psi}_2$ with the same Dirichlet/Neumann boundary conditions. By uniqueness, we get that $\check{\psi}$ is symmetric: $\check{\psi}(u,v)=\check{\psi}(v,u)$,
so it satisfies $(\partial_u -\partial_v) \check{\psi}(u,v) \mid_{u=v}=0$. Therefore 
$\psi_2: (u,v) \in D \mapsto \sqrt{2} \check{\psi}(u,v)$ is an eigenvector of ${\cal H}_2$ on $D$ with the boundary conditions
$\partial_u  \psi_2(0,v)=0$, $\psi_2(u,1)=0$, $(\partial_u -\partial_v) \psi_2(u,v) \mid_{u=v}=0$ with the eigenvalue $-\check{\mu}$.
If we use $\psi_2$ as a test function in (\ref{def:mupropcont}), we find that $\mu  \leq \check{\mu} $.\\
We can show as in \cite{gomez} that $-{\mu}$ is a simple eigenvalue of the operator ${\cal H}_2$ on $D$ with the 
Dirichlet/Neumann boundary conditions $\partial_u  \psi(0,v)=0$, $\psi(u,1)=0$, $(\partial_u -\partial_v) \psi(u,v) \mid_{u=v}=0$,
and that the corresponding eigenvector ${\psi}$ is smooth.
If we use $\check{\psi}(u,v) = \psi(u,v) {\bf 1}_{u \leq v}+ \psi(v,u){\bf 1}_{u > v}$  as a test function in (\ref{eq:caracmutilde2}), then we find that 
$\check{\mu}\leq \mu$.
\qed

Propositions \ref{prop:lambdacont} and \ref{prop:mucont} make it possible to prove the following identity
that establishes a simple relation between the growth rates of the means and variances of the mode powers in the continuum approximation.

%We know that we must have $\mu \geq 2 \lambda$
%because $\mu^{(N)}\geq 2 \lambda^{(N)}$ for any $N$,
%therefore we get the following result.

\begin{proposition}
In the continuum approximation, we have
\begin{equation}
\mu=2\lambda ,
\end{equation}
where $\lambda$ and $\mu$ are defined in Propositions \ref{prop:lambdacont} and \ref{prop:mucont}, respectively.
\end{proposition}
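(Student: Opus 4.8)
\emph{Proof proposal.} The plan is to prove the two inequalities $\mu \le 2\lambda$ and $\mu \ge 2\lambda$ separately, working throughout with the equivalent characterization (\ref{eq:caracmutilde2}) of $\mu$ on the square $(0,1)^2$ rather than the triangular one, because on the square both the Dirichlet form and the boundary conditions defining $\check{\cal D}_2$ separate cleanly into a $u$-part and a $v$-part. The single preliminary fact I would record is the one-dimensional Poincar\'e-type inequality that is just a restatement of Proposition \ref{prop:lambdacont}: for every $w \in {\cal C}^\infty([0,1])$ with $w'(0)=0$ and $w(1)=0$ one has $\int_0^1 \gamma_\infty(u) w'(u)^2 du \ge \lambda \int_0^1 w(u)^2 du$, with equality for the (normalized, positive, essentially unique) first eigenfunction $\varphi$ of ${\cal H}_1$, which in particular satisfies $\int_0^1 \varphi^2 = 1$ and $\int_0^1 \gamma_\infty \varphi'^2 = \lambda$. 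This inequality follows from the $\inf$ in Proposition \ref{prop:lambdacont} by normalizing $w$ to unit $L^2$ norm.

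For the lower bound, I would take an arbitrary $\check\psi \in \check{\cal D}_2$. For each fixed $v \in (0,1)$ the slice $u \mapsto \check\psi(u,v)$ is smooth and satisfies $\partial_u \check\psi(0,v)=0$ and $\check\psi(1,v)=0$, so the preliminary inequality gives $\int_0^1 \gamma_\infty(u)[\partial_u\check\psi(u,v)]^2\,du \ge \lambda \int_0^1 \check\psi(u,v)^2\,du$; integrating in $v$ and using $\iint_{(0,1)^2}\check\psi^2 = 1$ yields $\iint_{(0,1)^2}\gamma_\infty(u)[\partial_u\check\psi]^2\,du\,dv \ge \lambda$. The symmetric argument applied to the slices $v \mapsto \check\psi(u,v)$, which satisfy the Neumann condition at $v=0$ and the Dirichlet condition at $v=1$, gives $\iint_{(0,1)^2}\gamma_\infty(v)[\partial_v\check\psi]^2\,du\,dv \ge \lambda$. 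Adding and taking the infimum over $\check{\cal D}_2$ in (\ref{eq:caracmutilde2}) gives $\mu \ge 2\lambda$.

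For the upper bound, I would simply plug the product test function $\check\psi(u,v) = \varphi(u)\varphi(v)$ into (\ref{eq:caracmutilde2}): it is smooth on $[0,1]^2$, it lies in $\check{\cal D}_2$ since $\varphi'(0)=0$ and $\varphi(1)=0$ force all four boundary conditions, and it is normalized because $\iint_{(0,1)^2}\varphi(u)^2\varphi(v)^2\,du\,dv = \big(\int_0^1\varphi^2\big)^2 = 1$. Its Dirichlet energy factors as $\big(\int_0^1\gamma_\infty\varphi'^2\big)\big(\int_0^1\varphi^2\big) + \big(\int_0^1\varphi^2\big)\big(\int_0^1\gamma_\infty\varphi'^2\big) = \lambda + \lambda = 2\lambda$, so $\mu \le 2\lambda$. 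Combined with the lower bound this gives $\mu = 2\lambda$.

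I do not expect a genuine obstacle here: the statement is essentially the separability of the operator ${\cal H}_2$ on the square, which with the separated Neumann/Dirichlet conditions is ${\cal H}_1\otimes I + I\otimes{\cal H}_1$, so its spectrum is $\{-(\lambda_i+\lambda_j)\}$ and the top eigenvalue is $-2\lambda$; the variational two-sided estimate above is just the elementary, self-contained packaging of this fact. The only point needing a line of care is the legitimacy of freezing one variable, applying the one-dimensional inequality pointwise, and integrating — which is immediate since $\check\psi \in {\cal C}^\infty([0,1]^2)$ makes every slice admissible and every integrand continuous, so Fubini applies.
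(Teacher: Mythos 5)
Your proposal is correct, and its upper bound ($\mu\le 2\lambda$ via the product test function $\varphi(u)\varphi(v)$ in the square characterization (\ref{eq:caracmutilde2})) is exactly the paper's. Where you genuinely diverge is the lower bound: the paper diagonalizes ${\cal H}_2$ on the square, expanding an arbitrary $\check\psi\in\check{\cal D}_2$ in the tensor eigenbasis $\check\psi_{m,n}=\varphi_m\otimes\varphi_n$ and using that the Dirichlet form equals $\sum_{m,n}\mu_{m,n}c_{m,n}^2$ with $\mu_{m,n}=\lambda_m+\lambda_n\ge 2\lambda$, whereas you apply the one-dimensional variational inequality of Proposition \ref{prop:lambdacont} slice by slice (each $u$-slice and each $v$-slice of $\check\psi$ satisfies the mixed Neumann/Dirichlet conditions) and integrate by Fubini. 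Your route is more elementary and self-contained: it needs only the definition of $\lambda$ as an infimum and the smoothness of test functions, not the completeness of the eigenbasis of ${\cal H}_1$, the tensor-basis property on $L^2((0,1)^2)$, or the term-by-term identity for the quadratic form that the paper invokes somewhat tersely. What the paper's spectral argument buys in exchange is a description of the full spectrum of ${\cal H}_2$ as $\{-(\lambda_m+\lambda_n)\}$, which dovetails with the subsequent identification (by simplicity of $-\mu$) of the first eigenvector as $\varphi\otimes\varphi$, used to derive (\ref{eq:limitRcont}); your argument still supports that identification, since it only requires $\mu=2\lambda$ together with the simplicity already established before the proposition, but it does not exhibit it directly. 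The only points needing the brief care you already note are that the slice inequality is applied to possibly vanishing slices (trivially true) and that $\int_0^1\gamma_\infty\varphi'^2=\lambda$, which follows from the eigenvalue equation and the boundary conditions by integration by parts — a fact the paper's own test-function step uses implicitly as well.
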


\begin{proof}
If ${\varphi}$ is the eigenvector of ${\cal H}_1$ with the boundary conditions $\varphi'(0)=0$, $\varphi(1)=0$ with eigenvalue $-\lambda$, 
then $\check{\psi}:(u,v) \in (0,1)^2 \mapsto {\varphi}(u) {\varphi}(v)$ is an eigenvector of
${\cal H}_2$ on $(0,1)^2$ with the boundary conditions $\partial_u \check{\psi} (0,v)=0 , \, \partial_v \check{\psi} (u,0)=0 ,
 \,  \check{\psi}(u,1)=0, \, \check{\psi}(1,v)=0$
with the eigenvalue $-2\lambda$. If we use $\check{\psi}$ as a test function in (\ref{eq:caracmutilde2}), then we find that $\mu \leq 2\lambda$. 

{The operator ${\cal H}_1$ is self-adjoint in $L^2(0,1)$ with Neumann boundary condition at $0$ and Dirichlet boundary condition at $1$. Therefore, there exists an eigenbasis $(\varphi_n)_{n\geq 0}$ with the eigenvalues $(-\lambda_n)_{n\geq 0}$, with $0 <\lambda_0 < \lambda_{1} \leq \cdots \leq \lambda_n \leq \cdots$.
The function $\varphi_0$ is the unique eigenvector 
of ${\cal H}_1$ associated to the eigenvalue $-\lambda_0=-\lambda$. The family of functions $(\check\psi_{m,n})_{m,n\geq 0}$ with $\check\psi_{m,n}(u,v)=\varphi_m(u)\varphi_n(v)$ forms a basis of the space $L^2((0,1)^2)$ with Neumann boundary conditions at $\{u=0\}$ and $\{v=0\}$ and Dirichlet boundary conditions at $\{u=1\}$ and $\{v=1\}$.
The function $\check\psi_{m,n}$ is an  eigenfunction of the operator ${\cal H}_2$, with the eigenvalue $-\mu_{m,n}=- \lambda_m -\lambda_n$.
Therefore, for any function $\check\psi \in \check{\cal D}_2$, we have $\check\psi = \sum_{m,n} c_{m,n} \check\psi_{m,n}$ with
$\sum_{m,n} c_{m,n}^2=1$ and ${\cal H}_2 \check\psi = - \sum_{m,n} c_{m,n} \mu_{m,n}\check\psi_{m,n}$,
so that 
\begin{align*}
\iint_{(0,1)^2}  \gamma_\infty(u) [\partial_u \check{\psi}(u,v)]^2+\gamma_\infty(v) [\partial_v \check{\psi}(u,v)]^2 du dv  
=
- \iint_{(0,1)^2} \check{\psi} {\cal H}_2 \check{\psi} (u,v) du dv\\
=
\sum_{m,n}  \mu_{m,n} c_{m,n}^2 \geq  2\lambda ,
\end{align*}
which shows that $\mu \geq 2\lambda$.
}
\end{proof}

By uniqueness this implies that the eigenvector $\check{\psi}$ of ${\cal H}_2$ on $(0,1)^2$ associated to $-\mu$ 
is $\check{\psi}:(u,v) \in (0,1)^2 \mapsto {\varphi}(u) {\varphi}(v)$.
This in turn implies that the eigenvector ${\psi}$ of ${\cal H}_2$ on $D$ associated to $-\mu=-2\lambda$ 
is $ {\psi}:(u,v) \in D \mapsto \sqrt{2} {\varphi}(u) {\varphi}(v)$.
As a result we get
$$
S_{jl}(z) \stackrel{z \to +\infty}{\simeq}
2 c_W \varphi(j/{N}) \varphi(l/N) \exp\big( -2 \lambda z \big)  ,
$$
with $c_W = \sum_{j,l=0}^{N-1} \varphi(j/N) \varphi(l/N) |a_{j,{\rm s}}|^2|a_{l,{\rm s}}|^2 /N^2= c_V^2$, $c_V = 
\sum_{j=0}^{N-1} \varphi(j/N)   |a_{j,{\rm s}}|^2|/N$,
and therefore
\begin{equation}
\label{eq:limitRcont}
R_{jl}(z) \stackrel{z \to +\infty}{\simeq}
c_V^2 (1+\delta_{jl})  \varphi(j/{N}) \varphi(l/N)\exp\big( -2 \lambda  z \big)  .
\end{equation}
This result is the key to show that we will not observe any exponential growth of the relative intensity fluctuations
in the continuum approximation.

{\bf Exponential growth of the intensity fluctuations.}
It is a general feature that, for any matrix $\boldsymbol{\Gamma}$ and effective dissipation coefficients
$\Lambda_j$, we have $\mu - 2\lambda  \leq 0$ 
(this is a consequence of Cauchy-Schwarz inequality: the square of the mean mode power cannot grow faster than the mean square mode power).
The first two moments of the pointwise intensity 
$|{p}(x,z)|^2$ for large $z$ are
\begin{align}
\EE [ |{p}(x,z)|^2] \stackrel{z \to \infty}{\simeq} 
& \sum_{j=0}^{N-1}\frac{\phi_j(x)^2}{\beta_j} c_V V_j e^{- \lambda z} ,\\
\EE [ |{p}(x,z)|^4 ] \stackrel{z \to \infty}{\simeq} 
& \sum_{j,l=0}^{N-1} \frac{\phi_j(x)^2\phi_l(x)^2}{\beta_j \beta_l} c_W W_{jl} e^{- \mu z} .
\end{align}

Without dissipation we have the following result for the relative fluctuations of the pointwise intensity:
$$ 
\frac{\EE[|{p}(x,z)|^4]}{\EE[|{p}(x,z)|^2]^2} \stackrel{z \to \infty}{\longrightarrow} 
\frac{2N}{N+1}  ,
$$
which is equal to $2$ when $N \gg 1$.

With dissipation
\begin{equation}
\frac{\EE[|{p}(x,z)|^4]}{\EE[|{p}(x,z)|^2]^2} \stackrel{z \to \infty}{\sim} 
\exp\big( - (\mu - 2\lambda) z \big)  ,
\label{eq:si1}
\end{equation}
that grows exponentially with the propagation distance.
With weak dissipation,
\begin{equation}
\frac{\EE[|{p}(x,z)|^4]}{\EE[|{p}(x,z)|^2]^2} \stackrel{z \to \infty}{\simeq} 
\frac{2N}{N+1} 
\exp\big( - (\mu - 2\lambda) z \big) \big(1 +o(1) \big) ,
\label{eq:si1b}
\end{equation}
because the first eigenvectors ${\itbf V}$ and ${\itbf W}$ are close to the ones
of the non-dissipative case.
Note, however, that the exponential growth happens only 
for very long distances, because $|\mu - 2\lambda |$
is very small as shown above.
Eq.~(\ref{eq:diffvp}) gives the expression of the exponential growth rate 
when dissipation is weak and $\Gamma_{jl}\equiv \Gamma$ for $j \neq l$: the growth rate increases when the effective modal dissipation coefficients
become different from each other and decreases when the number of modes increases.
The analysis in the continuum approximation confirms that the exponential growth rate of the relative intensity fluctuations vanishes when
the number of modes goes to infinity. 
More exactly, in the continuum approximation, when the number of modes becomes large, we have $\mu=2\lambda$ and (\ref{eq:limitRcont}) holds. 
Therefore there is no exponential growth of the fluctuations and we have 
\begin{equation}
\frac{\EE[|{p}(x,z)|^4]}{\EE[|{p}(x,z)|^2]^2} \stackrel{z \to \infty}{\simeq} 
2 ,
\end{equation}
which corresponds to a relative variance (or scintillation index) equal to one.
We recover the standard result that the wavefield, in the point of view of the fourth-order moments,
behaves as a Gaussian process with relative variance (scintillation index) equal to one~\cite{garniersolna:arma}.

\section{Conclusion}
In this paper we have reviewed the asymptotic theory of wave propagation in random waveguides.
We have recovered standard results about the first two moments of the mode amplitudes: the mean amplitudes decay exponentially and
the mean powers satisfay a coupled mode equation.
The fourth-order moment analysis also reveals that the fluctuation of the mode powers grow exponentially with the propagation distance.
We have carefully studied the exponential growth rates of the relative variances.
We have shown that, when the number of guided modes increases, the exponential growth rates vanish
and the scintillation index (the relative variance of the intensity fluctuations) becomes equal to one,
as observed in open medium in the random paraxial regime \cite{garniersolna:arma}.
{These results show that incoherent imaging in a random waveguide (such as a Pekeris waveguide in underwater acoustics)
is challenging. Indeed incoherent imaging is based on the use of the cross correlations of the recorded signals \cite{dumaz}.
The estimation of the second-order moments of the wavefield is, however, extremely difficult because of the large variances of the empirical second-order moments and one may need to average over a lot a samples (while the medium may be not stationary as in underwater acoustics).
This is in contrast with the situation in open three-dimensional random media where smoothed Wigner transforms are statistically stable \cite{bal,garniersolna:arma}.
More generally, the results on the fourth-order moments show that the predictions of the coupled mode equations
(which describe the evolutions of the statistical second-order moments of the wavefield, such as Eq.~(\ref{eq:momQ})) 
are not easy to exploit experimentally when the number of guided modes is not very large.}

\appendix

\section{Wave mode decomposition}
\label{app:dec}
Let us  introduce the Helmholtz operator
\begin{align}
{\cal H} = 
 \partial_x^2 + k^2 {\rm n}^{(0)}(x)^2  .
 \label{eq:helmholtz}
\end{align}
The Helmholtz operator ${\cal H}$ is self-adjoint with respect to the standard scalar product defined on $L^2(\RR)$ by:
\begin{align}
\nonumber
(\phi_1,\phi_2)_{L^2} := \int_\RR  \overline{\phi_1(x)} {\phi_2(x)}dx .
\label{scalarproduct}
\end{align}
The Helmholtz operator has a spectrum of the form (\ref{eq:spectrum0})
where the $N$ modal wavenumbers $\beta_j$ are positive and 
$k^2< \beta_{N-1}^2 < \cdots <\beta_0^2 <n^2 k^2$.

{\bf Discrete spectrum.}
The $j$th eigenvector associated to the eigenvalue $\beta_j^2$ is even for even $j$:
\begin{align}
\phi_j(x) = 
\left\{
\begin{array}{l}
A_j \cos (\sigma_j x/d) \mbox{ if } 0\leq |x| \leq d/2\\
A_j \cos (\sigma_j/2 ) \exp(-\zeta_j (|x|/d-1/2))\mbox{ if }  |x| \geq d/2
\end{array}
\right.
\end{align}
and odd for odd $j$
\begin{align}
\phi_j(x) = 
\left\{
\begin{array}{l}
A_j \sin (\sigma_j x/d) \mbox{ if } 0\leq |x| \leq d/2\\
A_j \sin (\sigma_j/2 ) {\rm sgn}(x) \exp(-\zeta_j (|x|/d-1/2))\mbox{ if }  |x| \geq d/2
\end{array}
\right.
\end{align}
where 
\begin{align}
\sigma_j =   \sqrt{n^2 k^2  -\beta_j^2} d, \quad \zeta_j =  \sqrt{\beta_j^2-{k}^2 } d ,
\end{align}
and
\begin{align}
A_j^2 =
\left\{
\begin{array}{ll}
 \frac{1/d}{ ( \frac{1}{2}+\frac{\sin (\sigma_j)}{2\sigma_j})
+  \frac{\cos^2 (\sigma_j/2)}{\zeta_j} }  & \mbox{ for even $j$}
\\
 \frac{1/d}{ ( \frac{1}{2}-\frac{\sin (\sigma_j)}{2\sigma_j})
+  \frac{\sin^2 (\sigma_j/2)}{\zeta_j} }  & \mbox{ for odd $j$}
\end{array}
\right.
\end{align}
For even $j$ the $\sigma_j$'s are the solutions in $(0, \sqrt{n^2-1}kd)$ of
\begin{align}
 \tan (\sigma/2) = \frac{ \sqrt{  (n^2-1) k^2d^2 -\sigma^2} }{\sigma}.
\end{align}
For odd $j$ the $\sigma_j$'s are the solutions in $(0, \sqrt{n^2 -1}kd)$ of
\begin{align}
\tan (\sigma/2) = - \frac{\sigma}{\sqrt{ (n^2-1) k^2d^2 -\sigma^2}}   ,
\end{align}
and we denote by $N$ the number of solutions.
We have $\sigma_{j} \in (j\pi,(j+1)\pi)$
and 
\begin{equation}
\label{eq:numberofmodes}
N=\lfloor \sqrt{n^2-1} k d/\pi \rfloor.
\end{equation}

{\bf Continuous spectrum.}
For $\gamma \in (-\infty,{k}^2)$, there are two improper eigenvectors (one is even and the other one is odd) and they have the form:
\begin{align}
\phi_{e,\gamma}(x) = 
\left\{
\begin{array}{l}
A_{e,\gamma} \cos (\eta_\gamma x/d) \mbox{ if } 0\leq |x|\leq d/2\\
A_{e,\gamma} \big[ \cos (\eta_\gamma /2) \cos(\xi_\gamma (|x|/d-1/2)) \\
\, \, \,
- {\eta_\gamma}/{\xi_\gamma} \sin(\eta_\gamma/2) \sin(\xi_\gamma (|x|/d-1/2))
\big]
\mbox{ if }  |x| \geq  d/2
\end{array}
\right.
\end{align}
\begin{align}
\phi_{o,\gamma}(x) = 
\left\{
\begin{array}{l}
A_{o,\gamma} \sin (\eta_\gamma x/d) \mbox{ if } 0\leq |x|\leq d/2\\
A_{o,\gamma} {\rm sgn}(x) \big[ \sin (\eta_\gamma /2)  \cos(\xi_\gamma (|x|/d-1/2)) \\
\, \, \,
+ {\eta_\gamma}/{\xi_\gamma} \cos(\eta_\gamma/2) \sin(\xi_\gamma (|x|/d-1/2))
\big]
\mbox{ if }  |x| \geq  d/2
\end{array}
\right.
\end{align}
where
\begin{align}
\eta_\gamma= \sqrt{n^2 k^2-\gamma}  d ,\quad \xi_\gamma = \sqrt{{k}^2-\gamma} d  ,
\end{align}
and
\begin{align}
A_{e,\gamma}^2 =& \frac{\xi_\gamma d}{2 \pi (\xi_\gamma^2 \cos^2(\eta_\gamma/2) +
  \eta_\gamma^2 \sin^2(\eta_\gamma/2))} ,\\
A_{o,\gamma}^2 =& \frac{\xi_\gamma d}{2 \pi (\xi_\gamma^2 \sin^2(\eta_\gamma/2) +
 \eta_\gamma^2 \cos^2(\eta_\gamma/2))}  .
\end{align}
We remark that $\phi_{t,\gamma}$ does not belong to $L^2(\RR)$, but 
$\left( \phi_{t,\gamma},\phi\right)_{L^2}$ can be defined for any test function $\phi \in L^2(\RR)$ as
\begin{align}
\left( \phi_{t,\gamma},\phi\right)_{L^2} = \lim_{M \to +\infty} \int_{-M}^M \phi_{t,\gamma}(x) \phi(x)  dx
,
\end{align}
where the limit holds (as a function in $\gamma$) in $L^2((-\infty,{k}^2))$.

{\bf Completeness.}
We have for any $\phi\in L^2(\RR)$:
\begin{align}
\left( \phi,\phi \right)_{L^2}  = \sum_{j=0}^{N-1} \big| \left( \phi_j ,\phi\right)_{L^2} \big|^2
+
\sum_{t\in \{e,o\}} \int_{-\infty}^{{k}^2} \big| \left( \phi_{t,\gamma},\phi\right)_{L^2}\big|^2 d\gamma   .
\end{align}
The map which assigns to every element of $\phi \in L^2(\RR)$ the coefficients of its spectral decomposition
$$
\phi \mapsto \Big( \left( \phi_j,\phi\right)_{L^2}  ,j=0,\ldots,N-1, 
 \left(\phi_{t,\gamma},\phi\right)_{L^2}, t\in \{e,o\},\gamma\in ({-\infty},{{k}^2})\Big)
$$
is an isometry from $L^2(\RR)$ onto $\CC^N \times L^2((-\infty,{k}^2))^2$.
This means that any function $\phi\in L^2(\RR)$ can be expanded on the set of the eigenfunctions of ${\cal H}$.


\begin{thebibliography}{99}

\bibitem{alonso}
R. Alonso, L. Borcea, and J. Garnier, 
Wave propagation in waveguides with rough boundaries, 
Commun. Math. Sci. {\bf 11}, 233--267 (2012).

%\bibitem{bal}
%G. Bal, T. Komorowski, and L. Ryzhik,
%Self-averaging of Wigner transforms in random media,
%Commun. Math. Phys. {\bf 242}, 81--135  (2003).

\bibitem{bal}
{
G. Bal and O. Pinaud,
Self-averaging of kinetic models for waves in random media,
Kinetic Related Models {\bf 1}, 85--100 (2008).
}

\bibitem{beran}
M. J. Beran and S. Frankenthal,
Volume scattering in a shallow channel,
J. Acoust. Soc. Am. {\bf 91}, 3203--3211 (1992).

\bibitem{borcea15}
L. Borcea, J. Garnier, and C. Tsogka, 
A quantitative study of source imaging in random waveguides, 
Commun. Math. Sci. {\bf 13}, 749--776 (2015).

\bibitem{colosi12}
J. A. Colosi, T. F. Duda, and A. K. Morozov,
Statistics of low-frequency normal-mode amplitudes in an ocean with random sound-speed perturbations: 
Shallow-water environments,
J. Acoust. Soc. Am. {\bf 131}, 1749--1761 (2012).

\bibitem{colosi09}
J. A. Colosi and A. Morozov,  
Statistics of normal mode amplitudes in an ocean with random sound speed perturbations: 
Cross mode coherence and mean intensity,
J. Acoust. Soc. Am. {\bf 126}, 1026--1035 (2009).

\bibitem{creamer}
D. Creamer, 
Scintillating shallow water waveguides,
J. Acoust. Soc. Am. {\bf 99}, 2825--2838 (1996).

\bibitem{dozier}
L. B.  Dozier and F. D.  Tappert, 
Statistics of normal-mode amplitudes in a random ocean. I. Theory, 
J. Acoust. Soc. Am. {\bf 63}, 353--365 (1978). 

\bibitem{dumaz}
{L. Dumaz, J. Garnier, and G. Lepoultier,
Acoustic and geoacoustic inverse problems in randomly perturbed shallow-water environments,
J. Acoust. Soc. Am. {\bf 146}, 458--469 (2019).}

\bibitem{FGPSbook}
J.-P. Fouque, J. Garnier, G. Papanicolaou, and K. S\o lna,
\emph{Wave Propagation and Time Reversal in Randomly Layered Media},
Springer, New York, 2007

\bibitem{garnier_evan}
J. Garnier, 
The role of evanescent modes in randomly perturbed single-mode waveguides,
Discrete and Continuous Dynamical Systems B {\bf 8}, 455--472  (2007). 

\bibitem{GP07}
J. Garnier and G. Papanicolaou, 
Pulse propagation and time reversal in random waveguides,
SIAM J. Appl.  Math. {\bf 67},  1718--1739 (2007).

\bibitem{garniersolna:wrcm}
J. Garnier and K. S\o lna, 
On effective attenuation in multiscale composite media, 
Waves in Random and Complex Media {\bf 25}, 482--505 (2015).

\bibitem{garniersolna:arma}
J. Garnier and K. S\o lna, 
Fourth-moment analysis for beam propagation in the white-noise paraxial regime,
Arch. Rational Mech. Anal.  {\bf 220}, 37--81 (2016).

\bibitem{gomez}
C. Gomez,
Wave propagation in shallow-water acoustic random waveguides,
Commun. Math. Sci. {\bf 9},  81--125 (2011).

\bibitem{gomez2}
C. Gomez,
Wave propagation in underwater acoustic waveguides with rough boundaries,
Commun. Math. Sci. {\bf 13}, 2005--2052 (2015).

\bibitem{kohler77}
W. Kohler and G. Papanicolaou,
Wave propagation in  randomly inhomogeneous ocean,
in Lecture Notes in Physics, Vol. 70,
J. B. Keller and J. S. Papadakis, eds.,
Wave Propagation and Underwater Acoustics,
Springer Verlag, Berlin, 1977.

\bibitem{magnanini}
R. Magnanini  and F. Santosa,
Wave propagation in a 2-d optical waveguide,
SIAM J. Appl. Math. {\bf 61}, 1237--1252 (2000).

\bibitem{marcuse69}
D. Marcuse,
{Radiation losses of dielectric waveguides in terms of the power spectrum of the wall distortion function},
Bell System Technical Journal {\bf 48}, 3233--3242 (1969).

\bibitem{marcuse}
D. Marcuse,
{\it Theory of Dielectric Optical Waveguides}, 
Academic Press, New York, 1974.

\bibitem{papa74}
G. Papanicolaou and W. Kohler, 
{Asymptotic theory of mixing stochastic ordinary differential equations}, 
Commun. Pure Appl. Math. {\bf 27}, 641--668 (1974).

\bibitem{sv}
D. W. Stroock and S. R. S. Varadhan, 
{\it Multidimensional Diffusion Processes}, 
Springer-Verlag, Berlin, 1979.

\bibitem{wilcox}
C. Wilcox, 
Spectral analysis of the Pekeris operator in the theory of acoustic wave propagation in shallow water, 
Arch. Rational Mech. Anal. {\bf 60}, 259--300 (1976).

\end{thebibliography}
\end{document}